\documentclass[10pt,reqno]{amsart}
\usepackage[cp1251]{inputenc}
\usepackage[english]{babel}
\usepackage{amsmath}
\usepackage{amssymb}
\usepackage{amsfonts}
\usepackage{graphicx}
\usepackage{eucal}
\usepackage{hyperref,amsthm}
\RequirePackage{bbold}
\usepackage{mathptmx,bm}
\setlength{\textheight}{20.5 cm}
\setlength{\textwidth}{14 cm}
\newtheorem{theorem}{Theorem}
\newtheorem{lemma}{Lemma}

\theoremstyle{definition}

\newtheorem{remark}{Remark}
\numberwithin{equation}{section}
\setcounter{page}{1}

\begin{document}

\title[{On asymptotic normality of the total progeny in the positive recurrent Q-processes}]
    {On asymptotic normality of the total progeny in the positive recurrent Q-processes}

\author{{Azam~A.~IMOMOV}}
\address {Azam Abdurakhimovich Imomov
\newline\hphantom{iii} Karshi State University,
\newline\hphantom{iii} 17, Kuchabag st., 180100 Karshi city, Uzbekistan.}
\email{{{imomov{\_}\,azam@mail.ru}}}

\author{{Zuhriddin~A.~NAZAROV}}
\address {Zuhriddin~A.~Nazarov
\newline\hphantom{iii} V.I.Romanovskiy Institute of Mathematics,
\newline\hphantom{iii} Uzbekistan Academy of Sciences, Tashkent 100174, Uzbekistan.}
\email{zuhrov13@gmail.com}

\thanks{\copyright \ 2023 Imomov~A.A, Nazarov~Z.A}

\subjclass[2010] {Primary 60J80; Secondary 60J85}

\keywords{Branching system, Q-process, Markov chain, generating function, transition probabilities,
    invariant distribution, extinction time, total progeny, positive recurrent, central limit theorem, law of large numbers.}

\dedicatory{Dedicated to the Fond Memory of our Great Ancestry}

\begin{abstract}
    We examine the population growth system called Q-processes. This is defined by the Galton-Watson Branching system
    conditioned on non-extinction of its trajectory in the remote future. In this paper we observe the total progeny
    up to time $n$ in the Q-process. By analogy with branching systems, this variable is of great interest in studying
    the deep properties of the Q-process. We find that the sum total progeny as a random variable approximates the standard
    normal distribution function under a second moment assumption for the initial Galton-Watson system offspring law.
    We estimate the speed rate of this approximation.
\end{abstract}

\maketitle

\section{Introduction and main results}      \label{ASec1}
    In the general theory of random processes models of stochastic branching systems are particularly important.
    Nowadays, there is great interest in these models. The creation of the theory of branching models is related to the
    possibility of estimating the survival probability of the population of monotypic individuals. The discrete-time simple
    branching process model was introduced by Francis Galton in 1889 as a  mathematical model for the population family growth
    is now called the Galton-Watson Branching (GWB) system; see \cite{AsHer}, \cite{ANey}, \cite{Harris63}, \cite{Jagers},
    \cite{Karp94}, \cite{Ken75} and \cite{Sev71}. GWB models play a fundamental role in both the theory and applications of
    stochastic processes. Among the random trajectories of branching systems, there are those that continue a long time.
    In the case of the GWB model, the class of such trajectories forms another stochastic model called Q-process;
    see \cite{ANey} and \cite{Imomov2014}. In the case of continuous-time Markov branching systems, an analogous
    model called the \emph{Markov Q-process}, was first introduced in \cite{Imomov2012}.

    Let $\left\{{Z(n), n\in{\mathbb{N}}_0}\right\}$ GWB system with branching rates $\left\{{p_k, k\in\mathbb{N}_0}\right\}$,
    where $\mathbb{N}_0=\{0\}\cup\mathbb{N}$ and $\mathbb{N}=\{1,2, \ldots\}$, the variable $Z(n)$ denote the population size at the moment $n$
    in the system. The evolution of the system occurs according to the following mechanism. Each individual lives a unit length life time
    and then gives $k\in\mathbb{N}_0$ descendants with probability $p_k$. This process is a reducible, homogeneous-discrete-time Markov
    chain with a state space consisting of two classes: $\mathcal{S}_0=\{0\}\cup\mathcal{S}$, where $\{0\}$ is absorbing state, and
    $\mathcal{S}\subset\mathbb{N}$ is the class of possible essential communicating states. Throughout the paper assume that $p_0>0$
    and $p_0+p_1>0$ which called the Schr\"{o}der case. We suppose that $p_0+p_1<1$ and $m:=\sum_{k\in\mathcal{S}}{kp_k}<\infty$.

    Considering transition probabilities
\begin{displaymath}
    P_{ij} (n):=\textsf{P}\left\{{Z(n+k)=j} \bigm\vert Z(k)=i \right\}
    \qquad {for \; any} \quad  k\in\mathbb{N}_0
\end{displaymath}
    we observe that the corresponding probability generating function (GF)
\begin{equation}               \label{AZ1}
    \sum_{k\in\mathcal{S}_0}{P_{ij}(n)s^k}=\bigl[{f_n(s)}\bigr]^i,
\end{equation}
    where $f_n(s):=\sum_{k\in\mathcal{S}_0}{\textsf{p}_k(n)s^k}$,
    therein $\textsf{p}_k(n):=P_{1k}(n)$ and, in the same time $f_n(s)$ is $n$-fold iteration of the offspring GF
\begin{displaymath}
    f(s):=\sum_{k \in\mathcal{S}_0}{p_k s^k}. 
\end{displaymath}    
    Needless to say that $f_n(0)=\textsf{p}_0(n)$ is a vanishing probability
    of the system initiated by one individual. Note that this probability tends as $n\to\infty$ monotonously to $q$, which
    called an extinction probability of the system, i.e. $\lim\nolimits_{n\to\infty}\textsf{p}_0(n)=q$; see \cite{ANey}.

    The extinction probability
\begin{itemize}
\item $q=1$ \quad if \; $m\leq1$;
\vspace{1.5mm}
\item $q<1$ \quad if \; $m>1$.
\end{itemize}
    Based on this, according to the values of the parameter $m$, the system is called
\begin{itemize}
\item \textit{sub-critical} \quad if \; $m<1$;
\vspace{1.5mm}
\item \textit{critical} \quad if \; $m=1$;
\vspace{1.5mm}
\item \textit{super-critical} \quad if \; $m>1$.
\end{itemize}

    Further we are dealing with the GWB system conditioned on the event $\left\{n<\mathcal{H}<\infty\right\}$, where
    $\mathcal{H}$ is an extinction time of the system, i.e. $\mathcal{H}:= \min\left\{{n\in\mathbb{N}: Z(n)=0}\right\}$.
    Let $\textsf{P}_{i}\bigl\{{*}\bigr\}:= \textsf{P}\left\{{{\ast} \bigm\vert {Z(0)=i}}\right\}$ and
    define conditioned probability measure
\begin{displaymath}
    \textsf{P}_i^{\mathcal{H}(n+k)}\{*\}: = \textsf{P}_i \left\{*\bigm\vert n+k<{\mathcal{H}<\infty}\right\}
    \qquad {for \; any} \quad  k\in\mathbb{N}.
\end{displaymath}
    In {\cite[p.~58]{ANey}} proved, that
\begin{equation}               \label{AZ2}
    \mathcal{Q}_{ij}(n):= \lim_{k \to \infty}
    \textsf{P}_i^{{\mathcal{H}}(n+k)}\bigl\{Z(n)=j\bigr\}={{jq^{j-i}}\over{i\beta^n}}P_{ij}(n),
\end{equation}
    where $\beta:=f'(q)$. Observe that ${\sum_{j\in{\mathbb{N}}}{\mathcal{Q}_{ij}(n)}}=1$ for each $i\in{\mathbb{N}}$. Thus,
    the probability measure $\mathcal{Q}_{ij}(n)$ can determine a new population growth system with the state space
    $\mathcal{E}\subset\mathbb{N}$ which we denote by $\left\{W(n), n\in{\mathbb{N}}_0\right\}$. This is
    a discrete-homogeneous-time irreducible Markov chain defined in the book {\cite[p.~58]{ANey}} and 
    called \textit{the Q-process}. Undoubtedly $W(0)\mathop=\limits^d Z(0)$ and transition probabilities
\begin{displaymath}
    {\mathcal{Q}}_{ij}(n) := \textsf{P}\left\{W(n)=j \Bigm\vert {W(0)=i}\right\}
    = \textsf{P}_i \left\{Z(n)=j \Bigm\vert {\mathcal{H}}=\infty\right\},
\end{displaymath}
    so that the Q-process can be interpreted as a ``long-living'' GWB system.

    Put into consideration a GF
\begin{displaymath}
    w_n^{(i)}(s):=\sum\limits_{j\in{\mathcal{E}}} {{\mathcal{Q}}_{ij}(n)s^j}.
\end{displaymath}
    Then from \eqref{AZ1} and \eqref{AZ2} we obtain
\begin{equation}              \label{AZ3}
    w_n^{(i)}(s)=\left[{{\frac{f_n(qs)} {q}}} \right]^{i-1} \cdot w_n(s),
\end{equation}
    where the GF $w_n(s):=w_n^{(1)}(s)=\textsf{E}\left[{s^{W(n)} \bigm\vert {W(0) =1}} \right]$ has a form of
\begin{equation}              \label{AZ4}
    w_n (s) = s{\frac{f'_n(qs)} {\beta^n}}
    \quad {for \; all} \quad n \in  {\mathbb{N}}.
\end{equation}
    Using iterations for $f(s)$ in \eqref{AZ3} leads to the following functional equation:
\begin{equation}              \label{AZ5}
    w_{n+1}^{(i)}(s)={\frac{w(s)} {f_{q}(s)}}w_n^{(i)}\bigl({f_{q}(s)}\bigr),
\end{equation}
    where $w(s):=w_1(s)$ and $f_{q}(s)= {f(qs)\big/ q}$. Thus, Q-process is completely defined by setting the GF
\begin{equation}              \label{AZ6}
    w(s) = s{\frac{f'(qs)} {\beta}}\raise 1.5pt\hbox{.}
\end{equation}

    An evolution of the Q-process is in essentially regulated by the structural parameter $\beta >0$. In fact,
    as it has been shown in \cite[p.~59,~Theorem~2]{ANey}, that
\begin{itemize}
\item $\mathcal{E}$ \textit{is positive recurrent} \quad if \; $\beta <1$;
\vspace{1.5mm}
\item $\mathcal{E}$ \textit{is transient} \quad if \; $\beta =1$.
\end{itemize}
    On the other hand, it is easy to be convinced that positive recurrent case $\beta <1$ of Q-process
    is in a definition character of the non-critical case $m\neq{1}$ of the initial GWB system.
    Note that $\beta \leq{1}$ and nothing but.

    In this paper we deal with the positive recurrent case assuming that first moment $\alpha:=w'(1-)$ be finite. Then
    differentiating \eqref{AZ6} on the point $s=1$ we obtain $\alpha = 1+\gamma_{q}\cdot\left({1-\beta}\right)$, where
\begin{displaymath}
    \gamma_{q}:={\frac{qf''(q)}{\beta\left(1-\beta\right)}} \raise 1.5pt\hbox{.}
\end{displaymath}
    It follows from \eqref{AZ3} and \eqref{AZ4} that
\begin{displaymath}
    {\textsf{E}}_i W(n):=\textsf{E}\left[{W(n) \Bigm\vert {W(0)=i}} \right]=\left({i-1} \right)\beta^n+\textsf{E}W(n),
\end{displaymath}
    where $\textsf{E}W(n)= 1+\gamma_{q}\cdot\bigl({1-\beta^n}\bigr)$.

    It is obvious, that when initial GWB system is sub-critical, then the condition $\alpha<\infty$ is
    this is equivalent to that $f''(1-)<\infty$.
    Further we everywhere will be accompanied by this condition by default.

    Our purpose is to investigate asymptotic properties of a random variable
\begin{displaymath}
    S_{n}=W(0)+W(1)+ \cdots +W(n-1),
\end{displaymath}
    denoting the total number of individuals that have existed up to the $n$-th generation in Q-process.
    By analogy with branching systems, this variable is of great interest in studying the deep properties
    of the Q-process. For details on the total progeny in GWB systems and related models
    results, see e.g. \cite{Karp94}, \cite{Ken75}, \cite{Kol84}, \cite{Pakes71}.

    Throughout this paper we will use famous Landau symbols ${o}$, ${\mathcal{O}}$ and ${\mathcal{O}^\ast}$ to describe kinds
    of bounds on asymptotic varying rates of positive functions $f(x)$ and $g(x)$. for for all large enough values of at infinity.
    So, $f={o}(g)$ means that $\lim_{x}{f(x)}\big/{g(x)}=0$, and we write $f={\mathcal{O}}(g)$ if
    $\limsup_{x}{f(x)}\big/{g(x)}<\infty$ and also we write $f={\mathcal{O}^\ast}(g)$
    if the ratio ${f(x)}\big/{g(x)}$ has a positive explicit limit. i.e. $\lim_{x}{f(x)}\big/{g(x)}=C<\infty$.
    Moreover, $f\sim(g)$ means that $\lim_{x}{f(x)}\big/{g(x)}=1$.

    Our main results are analogues of Central Limit Theorem and Law of Large Numbers for $S_{n}$.
    Let $\mathcal{N}\left({0,\sigma^{2}}\right)$ be a normal distributed random variable with the zero mean
    and the finite variance $\sigma^{2}$ and ${\Phi}_{0,\sigma^{2}}(x)$ is its distribution function.

\begin{theorem}             \label{ATh1}
    Let $\beta<1$ and $\alpha <\infty$. Then there exists a positive real-valued sequence
    $\mathcal{K}_{n}$ such that $\mathcal{K}_{n}=\mathcal{O}^{\ast}\left(\sqrt{n}\right)$ and
\begin{displaymath}
    \frac{S_{n}-\textsf{E}S_{n}}{\mathcal{K}_{n}} \buildrel{\textsf{P}}\over\longrightarrow
    \mathcal{N}\left({0,\sigma^{2}}\right) \qquad {as} \quad n\to\infty,
\end{displaymath}
    where the symbol ``$\buildrel{\textsf{P}}\over\longrightarrow$''
    means the convergence in probability.
\end{theorem}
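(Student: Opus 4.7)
The plan is to recognize $S_{n}$ as a partial-sum functional of the positive-recurrent irreducible Markov chain $\{W(n)\}$ and to obtain the asymptotic normality by a Markov-chain central limit theorem. The candidate normalization is $\mathcal{K}_{n}:=\sqrt{\mathrm{Var}\,S_{n}}$; the crux of the argument is therefore to prove $\mathrm{Var}\,S_{n}\sim\sigma^{2}n$ with an explicit $\sigma^{2}>0$, from which $\mathcal{K}_{n}=\mathcal{O}^{\ast}\!\left(\sqrt n\right)$ follows automatically. A preliminary step is to verify that under the hypothesis $\alpha<\infty$ the chain admits a unique invariant distribution $\pi$ with $\pi(W)=1+\gamma_{q}$ (consistent with the limit of $\textsf{E}W(n)$) and finite second moment $\pi(W^{2})$. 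This is done by differentiating the GF identity \eqref{AZ4} twice at $s=1$ and passing to the limit, using $f''(1-)<\infty$ together with the contraction of $f_{q}$ near its interior fixed point $q$.

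Next I would compute the two-time generating function $\textsf{E}\bigl[s^{W(j)}t^{W(k)}\bigr]$ for $0\leq j\leq k$ via the Markov property and \eqref{AZ3}; differentiation in both variables at $s=t=1$ expresses $\mathrm{Cov}(W(j),W(k))$ as an explicit combination of the derivatives of $f_{n}$ at $q$. The essential quantitative estimate is the geometric covariance decay $\mathrm{Cov}(W(j),W(k))=\mathcal{O}\!\left(\beta^{k-j}\right)$, which reflects the contraction rate $|f_{q}'(q)|=\beta<1$. Summing and invoking stationarity then gives
\begin{displaymath}
\mathrm{Var}\,S_{n}=n\,\mathrm{Var}_{\pi}W+2\sum_{k=1}^{n-1}(n-k)\,\mathrm{Cov}_{\pi}\bigl(W(0),W(k)\bigr)+o(n)\sim\sigma^{2}n,
\end{displaymath}
where $\sigma^{2}=\mathrm{Var}_{\pi}W+2\sum_{k\geq1}\mathrm{Cov}_{\pi}(W(0),W(k))$ and the series converges by the geometric bound. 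Since moreover $\textsf{E}S_{n}=n(1+\gamma_{q})+\mathcal{O}(1)$ by the formula for $\textsf{E}W(n)$ recalled above, the centering in the theorem and the centering $n\pi(W)$ coincide up to an $\mathcal{O}(1)$ term that is negligible after dividing by $\sqrt n$.

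The limit law itself I would obtain by a Gordin--Lifshitz martingale approximation: the geometric-ergodicity contraction produces an $L^{2}(\pi)$ solution $g$ to the Poisson equation $g-Pg=w-\pi(w)$, where $P$ is the Q-process transition operator and $w(i):=i$. The decomposition
\begin{displaymath}
W(k)-\pi(W)=\bigl[g(W(k))-Pg(W(k-1))\bigr]+\bigl[Pg(W(k-1))-g(W(k-1))\bigr]
\end{displaymath}
writes $S_{n}-\textsf{E}S_{n}$ as a stationary square-integrable martingale plus a telescoping remainder of order $\mathcal{O}(1)$; the martingale CLT then delivers the normal limit with variance $\sigma^{2}$, and the asymptotic equivalence $\mathcal{K}_{n}\sim\sigma\sqrt n$ closes the argument.

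The main obstacle is the quantitative geometric ergodicity of the Q-process. Although $\beta<1$ provides analytic contraction of $f_{q}$ near $q$, converting this into a global Foster--Lyapunov drift condition on $\mathcal{E}$ that guarantees an $L^{2}(\pi)$ Poisson solution demands uniform control of $w_{n}^{(i)}(s)$ for large initial values $i$; concretely, one must show that $\textsf{E}_{i}W(n)^{2}$ grows at most linearly in $i$ with a bounded-in-$n$ prefactor. This is exactly where the full strength of the hypothesis $f''(1-)<\infty$ (equivalently $\alpha<\infty$) is consumed, and also where the distinction with the null-recurrent boundary case $\beta=1$ becomes essential.
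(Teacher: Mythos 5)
Your route --- treating $S_n$ as an additive functional of the positive recurrent chain $\{W(n)\}$ and invoking a Markov-chain CLT via a Gordin--Lifshitz martingale approximation --- is genuinely different from the paper's. The paper never touches $\mathrm{Var}\,S_n$ or the invariant law: it computes the characteristic function of $S_n$ directly from the product representation \eqref{AZ17}, $T_n(x)=\prod_{k=0}^{n-1}u_k(x)$, which comes out of the two-dimensional recursion \eqref{AZ9}, and extracts the Gaussian limit from the local expansions of $u(e^{\theta})$ in Lemmas~\ref{ALem4} and~\ref{ALem6}, choosing $\mathcal{K}_n=\mathcal{O}^{\ast}(\sqrt n)$ precisely so that $n\rho(i\theta/\mathcal{K}_n)$ has a finite negative limit. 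Where your approach applies it is more structural and gives a cleaner constant: since $\textsf{E}\left[W(j+k)\mid W(j)=i\right]=(i-1)\beta^{k}+\textsf{E}W(k)$, one gets $\mathrm{Cov}_\pi(W(0),W(k))=\beta^{k}\,\mathrm{Var}_\pi W$ exactly and hence $\sigma^{2}=\mathrm{Var}_\pi W\cdot(1+\beta)/(1-\beta)$.

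There are, however, genuine gaps. First and most seriously, your normalization $\mathcal{K}_n=\sqrt{\mathrm{Var}\,S_n}$ and your formula for $\sigma^{2}$ presuppose $\mathrm{Var}\,W(n)<\infty$ and $\mathrm{Var}_\pi W<\infty$. From \eqref{AZ6}, $w''(1-)$ involves $f'''(q)$; in the supercritical case $q<1$ this is automatically finite, but in the subcritical case $q=1$ the hypothesis $\alpha<\infty$ (equivalently $f''(1-)<\infty$) does \emph{not} give $f'''(1-)<\infty$, so $W(n)$ and $\pi$ may have infinite second moments and every step of your plan --- the covariance series, the $L^{2}(\pi)$ Poisson solution, the martingale CLT --- collapses. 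You would need an additional third-moment hypothesis (or a truncation argument); your closing claim that $f''(1-)<\infty$ is ``exactly where the full strength of the hypothesis is consumed'' is therefore not right. (To be fair, the paper's own Lemma~\ref{ALem2} and Lemma~\ref{ALem6} also quietly invoke $f_q'''$, so the defect is shared, but your write-up leans on second moments of $\pi$ explicitly.) Second, the stated target ``$\textsf{E}_iW(n)^{2}$ grows at most linearly in $i$'' is false: by \eqref{AZ3}, under $\textsf{P}_i$ the variable $W(n)$ is the independent sum of a subcritical system with offspring GF $f_q$ started from $i-1$ particles and a copy of $W(n)$ started from one, so $\textsf{E}_iW(n)^{2}\geq\bigl((i-1)\beta^{n}\bigr)^{2}$ is quadratic in $i$. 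This particular slip is repairable --- the correct drift function is $V(i)=i^{2}$ with $PV\leq(\beta^{2}+\varepsilon)V+b$, and in fact $P^{n}w(i)-\pi(w)=(i-1-\gamma_q)\beta^{n}$ exactly, so the Poisson solution is the explicit linear function $g(i)=(i-1-\gamma_q)/(1-\beta)$ --- but its membership in $L^{2}(\pi)$ returns you to the same missing second moment of $\pi$.
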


\begin{theorem}             \label{ATh2}
    Let $\beta<1$ and $\alpha <\infty$. Then there exists slowly varying function at infinity $\mathcal{L}(\ast)$ such that
\begin{displaymath}
    \left|\textsf{P}\left\{\frac{S_{n}-\textsf{E}S_{n}}{\mathcal{K}_{n}}<x\right\}
    -{\Phi}_{0,\sigma^{2}}(x)\right|\leq {\frac{\mathcal{L}(n)}{n^{1/4}}}
\end{displaymath}
    uniformly in $x$.
\end{theorem}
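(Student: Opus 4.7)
The plan is to derive a Berry--Esseen type bound via Esseen's smoothing inequality, thereby reducing the uniform-in-$x$ estimate to an integral bound on the difference between the characteristic functions of $(S_n-\textsf{E}S_n)/\mathcal{K}_n$ and of $\mathcal{N}(0,\sigma^{2})$.

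First, I would obtain a tractable representation of the characteristic function $\varphi_n(t):=\textsf{E}\bigl[\exp(it(S_n-\textsf{E}S_n)/\mathcal{K}_n)\bigr]$. Applying the Markov property of $\{W(n)\}$ together with iterations of the functional equation \eqref{AZ5}, one can compute the joint generating function $\textsf{E}_1\bigl[\prod_{k=0}^{n-1}z_{k}^{W(k)}\bigr]$ as a nested composition built from $f_{q}(s)=f(qs)/q$ and the factor $w(s)$ of \eqref{AZ6}. Substituting $z_{k}=e^{it/\mathcal{K}_n}$ and absorbing the centering through a linear phase then yields $\varphi_n(t)$ as an $n$-fold iterate in a neighbourhood of the fixed point $s=1$.

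Next, I would compare $\varphi_n(t)$ with $e^{-\sigma^{2}t^{2}/2}$. Because $\beta<1$ and $f''(1-)<\infty$ (equivalent to $\alpha<\infty$), Taylor expansion of $f_q$ and $w$ at $s=1$, together with the contractivity $f'_q(1)=\beta<1$, provides a geometric-series control on the error propagated through the $n$ iterations. The expected outcome is a bound of the form
\begin{displaymath}
    \bigl|\varphi_n(t) - e^{-\sigma^{2}t^{2}/2}\bigr|
    \leq \frac{\mathcal{L}_1(n)}{\sqrt{n}}\bigl(|t|+|t|^{3}\bigr)\,e^{-ct^{2}}
    \quad\text{for}\ |t|\leq \delta\sqrt{n},
\end{displaymath}
where the variance $\sigma^{2}$ is identified from the second-order coefficient of the expansion (consistently with $\mathcal{K}_{n}=\mathcal{O}^{\ast}(\sqrt{n})$ from Theorem~\ref{ATh1}) and $\mathcal{L}_1$ is slowly varying at infinity.

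Finally, I would insert this estimate into Esseen's smoothing inequality
\begin{displaymath}
    \sup_{x}\bigl|\textsf{P}\{(S_n-\textsf{E}S_n)/\mathcal{K}_n<x\}-{\Phi}_{0,\sigma^{2}}(x)\bigr|
    \leq \frac{1}{\pi}\int_{-T}^{T}\frac{|\varphi_n(t)-e^{-\sigma^{2}t^{2}/2}|}{|t|}\,dt+\frac{C}{T},
\end{displaymath}
obtaining an upper bound of order $T\mathcal{L}_1(n)/\sqrt{n}+C/T$. Optimising by $T\asymp n^{1/4}/\sqrt{\mathcal{L}_1(n)}$ balances the two contributions and gives the announced rate $\mathcal{L}(n)/n^{1/4}$, the slowly varying factors being absorbed into a single $\mathcal{L}$. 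The principal obstacle lies in the middle step: since $\{W(k)\}$ is a genuinely dependent sequence, $\varphi_n(t)$ does not factor, and the Gaussian approximation must be extracted directly from the $n$-fold iteration of $f_q$; propagating a Taylor remainder through these iterations while preserving the $1/\sqrt{n}$ scale requires careful use of the contractivity $\beta<1$, and the slowly varying losses incurred in this bookkeeping are exactly what degrades the classical Berry--Esseen exponent from $1/2$ to $1/4$.
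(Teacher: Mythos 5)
Your proposal follows essentially the same route as the paper: both estimate the characteristic function of $(S_n-\textsf{E}S_n)/\mathcal{K}_n$ through the iterated generating-function structure (the paper via $T_n(x)=\prod_{k=0}^{n-1}u_k(x)$ in \eqref{AZ17} and Lemma~\ref{ALem6}, culminating in \eqref{AZ39}), insert the resulting bound into the Esseen smoothing inequality, and balance the two terms by taking $T\asymp n^{1/4}$. One internal inconsistency worth noting: the damped bound $\frac{\mathcal{L}_1(n)}{\sqrt{n}}\bigl(|t|+|t|^{3}\bigr)e^{-ct^{2}}$ you posit would make the smoothing integral $\mathcal{O}\bigl(\mathcal{L}_1(n)/\sqrt{n}\bigr)$ uniformly in $T\leq\delta\sqrt{n}$ and hence yield the stronger rate $n^{-1/2}$, whereas your subsequent estimate $T\mathcal{L}_1(n)/\sqrt{n}+C/T$ --- and what the paper actually establishes in \eqref{AZ40}--\eqref{AZ41} --- corresponds to the undamped bound $\bigl|\varphi_n(\theta)-e^{-\sigma^{2}\theta^{2}/2}\bigr|=\mathcal{O}\bigl(|\theta|/\mathcal{K}_{n}\bigr)$, which is precisely what forces the exponent down to $1/4$.
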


    Let ${I_{a}}$ be a degenerate distribution concentrated at the point $a$, i.e.
\begin{displaymath}
    {I_{a}(B)}=
    \left\{\begin{array}{l}
    1 \qquad \hfill\text{if} \quad x\in{B},    \\
    \\
    0  \qquad \hfill\text{if} \quad x\notin{B}.
    \end{array} \right.
\end{displaymath}

\begin{theorem}             \label{ATh3}
    Let $\beta<1$ and $\alpha <\infty$. Then
\begin{displaymath}
    \frac{S_{n}}{n} \buildrel{\textsf{P}}\over\longrightarrow {1+\gamma_{q}} \qquad {as} \quad n\to\infty.
\end{displaymath}
    Moreover there exists slowly varying function at infinity ${\mathcal{L}_{\gamma}(\ast)}$ such that
\begin{displaymath}
    \left|\textsf{P}\left\{\frac{S_{n}}{n}<x\right\}
    -I_{1+\gamma_{q}}(x)\right|\leq {\frac{\mathcal{L}_{\gamma}(n)}{\sqrt{n}}}
\end{displaymath}
     uniformly in $x$, where
\begin{displaymath}
    {I_{1+\gamma_{q}}(x)}=
    \left\{\begin{array}{l}
    0 \qquad \hfill\text{if} \quad x\leq1+\gamma_{q},    \\
    \\
    1  \qquad \hfill\text{if} \quad x> 1+\gamma_{q}.
    \end{array} \right.
\end{displaymath}

\end{theorem}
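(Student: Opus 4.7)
My plan is to derive Theorem \ref{ATh3} as a direct consequence of Theorems \ref{ATh1}--\ref{ATh2}, exploiting that the spread $\mathcal{K}_n$ is only of order $\sqrt{n}$ while $\textsf{E}S_n$ grows linearly. First I would compute the mean: summing the formula $\textsf{E}W(k)=1+\gamma_q(1-\beta^k)$ stated just after \eqref{AZ6} yields
\begin{displaymath}
\textsf{E}S_n \;=\; n(1+\gamma_q) \;-\; \frac{\gamma_q(1-\beta^n)}{1-\beta},
\end{displaymath}
so that $\textsf{E}S_n/n = (1+\gamma_q) + \mathcal{O}(1/n)$.

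For the first (in-probability) statement I would use a Slutsky-type decomposition,
\begin{displaymath}
\frac{S_n}{n}-(1+\gamma_q) \;=\; \frac{\mathcal{K}_n}{n}\cdot\frac{S_n-\textsf{E}S_n}{\mathcal{K}_n} \;+\; \Bigl(\frac{\textsf{E}S_n}{n}-(1+\gamma_q)\Bigr).
\end{displaymath}
By Theorem \ref{ATh1} the middle factor $(S_n-\textsf{E}S_n)/\mathcal{K}_n$ is tight (it converges in distribution to $\mathcal{N}(0,\sigma^2)$), while $\mathcal{K}_n/n=\mathcal{O}^{\ast}(1/\sqrt{n})\to 0$ by hypothesis, so the first summand vanishes in probability. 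The remainder is $\mathcal{O}(1/n)$ by Step~1, giving $S_n/n\to 1+\gamma_q$ in probability.

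For the uniform rate I would set $y_n(x):=(nx-\textsf{E}S_n)/\mathcal{K}_n$, so that
\begin{displaymath}
\textsf{P}\bigl\{S_n/n<x\bigr\} \;=\; \textsf{P}\Bigl\{(S_n-\textsf{E}S_n)/\mathcal{K}_n<y_n(x)\Bigr\},
\end{displaymath}
and Theorem \ref{ATh2} yields $\bigl|\textsf{P}\{S_n/n<x\}-\Phi_{0,\sigma^2}(y_n(x))\bigr|\leq \mathcal{L}(n)/n^{1/4}$ uniformly in $x$. Writing $\mathcal{K}_n\sim C\sqrt{n}$ with $C>0$ and using Step~1, one has $y_n(x) = \sqrt{n}(x-1-\gamma_q)/C + \mathcal{O}(1/\sqrt{n})$; hence for every $x\neq 1+\gamma_q$ the argument $y_n(x)$ escapes to $\pm\infty$ at rate $\sqrt{n}$ and the Gaussian tail estimate forces $|\Phi_{0,\sigma^2}(y_n(x))-I_{1+\gamma_q}(x)|$ to be exponentially small, well below the required $1/\sqrt{n}$.

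The main obstacle is the uniformity near the jump $x=1+\gamma_q$, where $y_n(1+\gamma_q)=\mathcal{O}(1/\sqrt{n})$ makes $\Phi_{0,\sigma^2}(y_n(1+\gamma_q))\to 1/2$, so a naive comparison to $I_{1+\gamma_q}$ fails. I expect this will be dealt with by a separate, direct Chebyshev-type estimate: from $\textsf{Var}\,S_n = \sigma^2\mathcal{K}_n^{2}(1+o(1)) = \mathcal{O}(n)$ one gets $\textsf{P}\{|S_n/n-(1+\gamma_q)|>\varepsilon\}=\mathcal{O}\bigl(1/(n\varepsilon^{2})\bigr)$; optimising with $\varepsilon$ of order $n^{-1/2}$ then produces the uniform bound $\mathcal{L}_\gamma(n)/\sqrt{n}$ (effectively in the Lévy metric), outside a narrow $1/\sqrt{n}$-window around the jump, where the same Chebyshev inequality still gives a bound of that order.
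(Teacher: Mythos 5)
Your first claim (the convergence in probability) is fine: the decomposition $S_n/n-(1+\gamma_q)=(\mathcal{K}_n/n)\,\zeta_n+(\textsf{E}S_n/n-(1+\gamma_q))$ together with tightness of $\zeta_n$ from Theorem~\ref{ATh1} and the exact formula \eqref{AZ10} for $\textsf{E}S_n$ is a clean Slutsky argument. The paper instead works directly with the characteristic function, writing $\varphi_{\eta_n}(\theta)=e^{i\theta(1+\gamma_q)}[\varphi_{\zeta_n}(\theta)]^{\mathcal{K}_n/n}\bigl(1-\tfrac{i\theta\gamma_q}{1-\beta}\tfrac{1}{n}(1-\beta^n)\bigr)$ and invoking the continuity theorem; the two routes are equivalent in substance, and yours is arguably more transparent.

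The uniform rate is where your proposal has a genuine gap, and it is exactly at the point you flagged. For $x=1+\gamma_q+\epsilon_n$ with $0<\epsilon_n=o(1/\sqrt{n})$ one has $I_{1+\gamma_q}(x)=1$, while your own computation gives $y_n(x)\to 0$ and hence, by Theorem~\ref{ATh1}, $\textsf{P}\{S_n/n<x\}\to\Phi_{0,\sigma^2}(0)=1/2$; the Kolmogorov distance to the step function is therefore bounded below by a constant near $1/2$ in that window, and no Chebyshev estimate can repair this --- Chebyshev bounds $\textsf{P}\{|S_n/n-(1+\gamma_q)|>\varepsilon\}$ from above, whereas what you would need is that $\textsf{P}\{0\le S_n/n-(1+\gamma_q)<\epsilon_n\}$ is close to $1$, which is false since that probability is genuinely near $0$. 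Your parenthetical retreat to the L\'evy metric concedes precisely this; but the theorem asserts uniformity in $x$ of the pointwise difference, which your route cannot deliver. For what it is worth, the paper does not reduce to Theorem~\ref{ATh2} at all: it bounds $\bigl|\varphi_{\eta_n}(\theta)-e^{i\theta(1+\gamma_q)}\bigr|\le \bigl|\tfrac{i\theta\gamma_q}{1-\beta}\tfrac{1}{n}(1-\beta^n)\bigr|$ directly and then applies the Feller smoothing (Berry--Esseen) inequality with the degenerate limit, taking ``$M_\eta=1$''; note that this smoothing lemma requires the limiting distribution function to have a bounded derivative, which a step function does not, so the paper's own treatment of the jump is itself delicate. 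In any case, your proposed mechanism for closing the gap does not work, so the second half of the proof is incomplete.
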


    The rest of this paper is organized as follows.  Section~\ref{ASec2} provides auxiliary statements that will be essentially
    used in the proof of our theorems. Section~\ref{ASec3} is devoted to the proof of main results.

\section{Preliminaries}      \label{ASec2}

    Further we need the joint GF of the variables $W(n)$ and $S_{n}$
\begin{displaymath}
    J_{n}(s;x)=\sum_{j\in{\mathcal{E}}}^{}\sum_{l\in\mathbb{N}}^{}\textsf{P}\left\{W(n)=j, S_{n}=l\right\}s^{j}x^{l}
\end{displaymath}
    on a two-dimensional domain
\begin{displaymath}
    \mathbb{K}=\left\{(s;x)\in \mathbb{R}^{2}: \; s\in[0,1], \; x\in[0,1], \; \sqrt{(s-1)^{2}+(x-1)^{2}}>0\right\}.
\end{displaymath}
    Due to the Markov nature of the Q-process, we see that the two-dimensional one-step joint-transition probabilities
\begin{displaymath}
    \textsf{P}\left\{W(n+1)=j, S_{n+1}=l \Bigm\vert W(n)=i, S_{n}=k\right\}=
    \textsf{P}_{i}\left\{W(1)=j, S_{1}=l\right\}\delta_{l, i+k},
\end{displaymath}
    where $\delta_{ij}$ is the Kronecker's delta function:
\begin{displaymath}
    {\delta_{ij}}=
    \left\{\begin{array}{l}
    1  \qquad \hfill\text{if} \quad i=j,    \\
    \\
    0  \qquad \hfill\text{if} \quad i\neq{j}.
    \end{array} \right.
\end{displaymath}
    Therefore, we have
\begin{eqnarray*}
    \textsf{E}_{i}\left[s^{W(n+1)}x^{S_{n+1}} \Bigm\vert S_{n}=k\right]
    & = &\sum_{j\in\mathcal{E}}^{}\sum_{l\in \mathbb{N}}^{}\textsf{P}_{i}
    \left\{W(1)=j, S_{1}=l\right\}\delta_{l, i+k}s^{j}x^{l}   \nonumber\\
    \nonumber\\
    & = &\sum_{j\in\mathcal{E}}^{}\textsf{P}_{i}\left\{W(1)=j\right\}s^{j}x^{i+k}
    =w^{(i)}(s) x^{i+k}.
\end{eqnarray*}
    Next, using the formula of total probabilities, we obtain
\begin{eqnarray*}
    J_{n+1}(s;x)& = &
    \textsf{E}\left[\textsf{E}\left[s^{W(n+1)}x^{S_{n+1}} \Bigm\vert W(n), S_{n}\right]\right]
    =\textsf{E}\left[w^{W(n)}(s)  x^{W(n)+S_{n}}\right] \nonumber\\
    \nonumber\\
    & = &\textsf{E}\left[\bigl(w(s) f_{q}(s)\bigr)^{W(n)-1} x^{W(n)+S_{n}}\right]
    =\frac{w(s)}{f_{q}(s)} \textsf{E}\left[\bigl(xf_{q}(s)\bigr)^{W(n)}  x^{S_{n}}\right].
\end{eqnarray*}
    In the last line we  used formula \eqref{AZ3}. Thus we have
\begin{equation}    \label{AZ7}
    J_{n+1}(s;x)=\frac{w(s)}{f_{q}(s)}J_{n}\bigl(xf_{q}(s);x\bigr)
\end{equation}
    for $(s;x)\in \mathbb{K}$ and any $n\in \mathbb{N}$.

    Using relation \eqref{AZ7}, we can now obtain an explicit expression for the GF $J_{n}(s;x)$. Indeed,
    applying it consistently, taking into account \eqref{AZ6} and, after standard transformations, we have
\begin{equation}     \label{AZ8}
    J_{n}(s;x)=\frac{s}{\beta^{n}} \frac{\partial H_{n}(s;x)}{\partial{s}}\raise1pt\hbox{,}
\end{equation}
    where the function $H_{n}(s;x)$ is defined for any $(s;x)\in\mathbb{K}$
    by the following recursive relations:
\begin{equation}     \label{AZ9}
    \left\{\begin{array}{l}
    H_{0}(s;x)=s;    \\
    \\
    H_{n+1}(s;x)=xf_{q}\bigl(H_{n}(s;x)\bigr).
    \end{array} \right.
\end{equation}

    Since $\partial J_{n}(s;x)\bigl/\partial x\Bigl|_{(s;x)=(1;1)}=\textsf{E}S_{n}$,
    from \eqref{AZ8} and \eqref{AZ9}, we find that
\begin{equation}     \label{AZ10}
    \textsf{E}S_{n}=(1+\gamma_{q})n-\gamma_{q}\frac{1-\beta^{n}}{1-\beta} \raise1pt\hbox{.}
\end{equation}

\begin{remark}              \label{ARem1}
    Needles to say that the GF $f_{q}(s)=f(qs)\bigl/q$ generates a sub-critical GWB system. Denoting the population in this
    system as $Z_q(n)$, we define the sum $V_{n}=\sum_{k=0}^{n-1}Z_q(k)$ which is a total progeny of individuals that
    participated in the evolution of the system $\left\{Z_q(n), n\in\mathbb{N}_{0}\right\}$, up to the $n$-th generation.
    It is known that the GF of the joint distribution $\bigl(Z_q(n), V_{n}\bigr)$ satisfies the recursive equation \eqref{AZ9};
    see {\cite[p.~126]{Kol84}}.  Thus, the function $H_{n}(s;x)$ is a two-dimensional GF for all $n\in \mathbb{N}$ and
    $(s;x)\in \mathbb{K}$ and obeys to all properties of the GF $\textsf{E}\left[s^{Z_q(n)}x^{V_{n}}\right]$.
\end{remark}

    By virtue of what said in Remark~\ref{ARem1}, in studying $H_{k}(s;x)$ we use the properties of the GF
    $\textsf{E}\left[s^{Z_q(n)}x^{V_{n}}\right]$. Since the system $\left\{Z_q(n)\right\}$ is sub-critical,
    it goes extinct with probability $1$. Therefore, there exists a proper random variable $V={\lim}_{n\to\infty}V_{n}$,
    which means the total number of individuals participated in the whole evolution of the system. So
\begin{displaymath}
    h(x):=\textsf{E}x^{V}=\lim_{n\to\infty}\textsf{E}x^{V_{n}}=\lim_{n\to\infty}H_{n}(1;x)
\end{displaymath}
    and, according to \eqref{AZ9} it satisfies the functional equation
\begin{equation}     \label{AZ11}
    h(x)=xf_{q}\bigl(h(x)\bigr).
\end{equation}
    Further, we note that
\begin{displaymath}
    \textsf{P}\bigl\{Z_q(n)=0, V_{n}=k\bigr\}=\textsf{P}\bigl\{Z_q(n)=0, V=k \bigr\}.
\end{displaymath}
    Then, due to the monotonicity of the probabilistic GF, we find
\begin{displaymath}
    \textsf{P}\bigl\{V=k\bigr\}-\sum_{i\in\mathbb{N}}^{}\textsf{P}\bigl\{Z_q(n)=i, V_{n}=k\bigr\}s^{i}
    \leq \textsf{P}\bigl\{V=k, Z_q(n)>0\bigr\}.
\end{displaymath}
    Therefore, denoting
\begin{displaymath}
    R_{n}(s;x):=h(x)-H_{n}(s;x)
\end{displaymath}
    for $(s;x)\in\mathbb{K}$, we have
\begin{displaymath}
    R_{n}(s;x)\leq \sum_{k\in \mathbb{N}}^{}\textsf{P}\left\{V=k, Z_q(n)>0\right\}x^{k}=R_{n}(0;x).
\end{displaymath}
    It is easy to see $R_{n}(0;x)\leq R_{n}(0;1)=\textsf{P}\left\{Z_q(n)>0\right\}$. Then
\begin{equation}     \label{AZ12}
    \bigl|R_{n}(s;x)\bigr|\leq \textsf{P}\left\{Z_q(n)>0\right\}\longrightarrow 0 \qquad {as} \quad  n\to\infty.
\end{equation}
    On the other hand, due to the fact that $\left|h(x)\right|\leq 1$ and $\left|H_{n}(s;x)\right|\leq 1$ we have
\begin{eqnarray*}
    R_{n}(s;x) &=& x\left[f_{q}\bigl(h(x)\bigr)-f_{q}\bigl(H_{n-1}(s;x)\bigr)\right] \nonumber\\
    \\
    &=& x\textsf{E}\bigl[h(x)-H_{n-1}(s;x)\bigr]^{Z_q(n)} \leq \beta R_{n-1}(s;x)
\end{eqnarray*}
for all $(s;x)\in \mathbb{K}$. This implies that
\begin{equation}     \label{AZ13}
    \bigl|R_{n}(s;x)\bigr| \leq \beta^{n-k} \bigl|R_{k}(s;x)\bigr|
\end{equation}
    for any $n\in \mathbb{N}$ and $k=0,1,\dots,n$.

    In what follows, where the function $R_{n}(s;x)$ will be used, we deal with the domain $\mathbb{K}$, where this function does
    not vanish. By virtue of \eqref{AZ12},  taking into account \eqref{AZ9}, \eqref{AZ11}, we obtain the asymptotic formula
\begin{equation}     \label{AZ14}
    R_{n+1}(s;x)=xf_{q}'\bigl(h(x)\bigr)R_{n}(s;x)-
    x\frac{f_{q}''\bigl(h(x)\bigr)+\eta_{n}(s;x)}{2}R_{n}^{2}(s;x),
\end{equation}
    where $\left|\eta_{n}(s;x)\right|\to 0$ as $n\to\infty$ uniformly in $(s;x)\in \mathbb{K}$.
    Since $R_{n}(s;x)\to 0$, it follows from \eqref{AZ14} that
\begin{displaymath}
    R_{n}(s;x)=\frac{R_{n+1}(s;x)}{xf_{q}'\bigl(h(x)\bigr)}(1+o(1))  \qquad {as} \quad  n\to\infty.
\end{displaymath}
    Using last equality, we transform \eqref{AZ14} to the form
\begin{displaymath}
    R_{n+1}(s;x)=xf_{q}'\bigl(h(x)\bigr)R_{n}(s;x)-
    \left[\frac{f_{q}''\bigl(h(x)\bigr)}{2f_{q}'\bigl(h(x)\bigr)}+\varepsilon_{n}(s;x)\right]
    R_{n}(s;x)R_{n+1}(s;x)
\end{displaymath}
    and, therefore
\begin{equation}     \label{AZ15}
    \frac{u(x)}{R_{n+1}(s;x)}=\frac{1}{R_{n}(s;x)}+\upsilon(x)+\varepsilon_{n}(s;x),
\end{equation}
    where
\begin{displaymath}
    u(x)=xf_{q}'\bigl(h(x)\bigr) \qquad {and} \qquad \upsilon(x)=x\frac{f_{q}''\bigl(h(x)\bigr)}{2u(x)}
\end{displaymath}
    and $\sup_{(s;x)\in \mathbb{K}}\bigl|\varepsilon_{n}(s;x)\bigr|\leq \varepsilon_{n}\to 0$ as $n\to \infty$.
    By successively applying \eqref{AZ15}, we find the following representation for $R_{n}(s;x)$:
\begin{equation}     \label{AZ16}
    \frac{u^{n}(x)}{R_{n}(s;x)}=\frac{1}{R_{0}(s;x)}+\frac{\upsilon(x)\bigl[1-u^{n}(x)\bigr]}{1-u(x)}
    +\sum_{k=1}^{n}\varepsilon_{k}(s;x)u^{k}(x).
\end{equation}

    In what follows, our discussions will essentially be based on formula \eqref{AZ16}.
    Note that in the monograph {\cite[p.~136]{Kol84}} this formula was stated for the critical GWB system.

    Now, for convenience, we write
\begin{displaymath}
    J_{n}(s;x)=s\prod_{k=0}^{n-1}\frac{xf_{q}'\bigl(H_{k}(s;x)\bigr)}{\beta}
\end{displaymath}
    which is a direct consequence of formulas \eqref{AZ8} and \eqref{AZ9}.
    In our notation, it is almost obvious that $T_{n}(x):=\textsf{E}x^{S_{n}}=J_{n}(1;x)$. Then it follows that
\begin{equation}            \label{AZ17}
    T_{n}(x)=\prod_{k=0}^{n-1}u_{k}(x),
\end{equation}
    where
\begin{displaymath}
    u_{n}(x)=\frac{xf_{q}'\bigl(h_{n}(x)\bigr)}{\beta}\raise1pt\hbox{,}
\end{displaymath}
    at that $h_{n}(x)=\textsf{E}x^{V_{n}}$ which satisfies a recurrence equation $h_{n+1}(x)=xf_{q}\bigl(h_{n}(x)\bigr)$.
    Accordingly, the function $\Delta_{n}(x):=h(x)-h_{n}(x)$ satisfies the inequality
\begin{equation}            \label{AZ18}
    \bigl|\Delta_{n}(x)\bigr| \leq \beta^{n-k}\bigl|\Delta_{k}(x)\bigr|
\end{equation}
    which is a consequence of \eqref{AZ13}. Successive application of the inequality \eqref{AZ18} gives
\begin{equation}       \label{AZ19}
    \bigl|\Delta_{n}(x)\bigr|=\mathcal{O}\bigl(\beta^{n}\bigr)\to 0  \qquad {as} \quad  n\to\infty
\end{equation}
    uniformly in $x\in \mathbb{K}$. Similarly to the case $R_{n}(s;x)$,
    taking into account \eqref{AZ19} we find the following representation:
\begin{equation}     \label{AZ20}
    \frac{u^{n}(x)}{\Delta_{n}(x)}=\frac{1}{h(x)-1}+\frac{v(x)\bigl[1-u^{n}(x)\bigr]}{1-u(x)}+
    \sum_{k=1}^{n}\varepsilon_{k}(x)u^{k}(x),
\end{equation}
    where $\sup_{x\in \mathbb{K}}\bigl|\varepsilon_{n}(x)\bigr|\leq \varepsilon_{n}\to{0}$ as $n\to\infty$.

    In our further discussion we will also need expansions functions
    $h(x)$ and $u(x)$ in the left neighborhood of the point $x=1$.

\begin{lemma}               \label{ALem1}
    Let $\beta<1$ and $\alpha <\infty$. Then for GF $h(x)=\textsf{E}x^{V}$
    the following local expansion holds:
\begin{equation}      \label{AZ21}
    1-h(x) \sim \frac{1}{1-\beta}(1-x)-\frac{2\beta(1-\beta)+b_{q}}{2(1-\beta)^{3}}(1-x)^{2}  \qquad {as} \quad x\uparrow 1,
\end{equation}
    where $b_{q}:=f_{q}''(1-)$.
\end{lemma}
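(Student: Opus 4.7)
The plan is to work directly from the fixed-point relation \eqref{AZ11}, namely $h(x)=xf_{q}(h(x))$, together with the Taylor expansion of $f_{q}$ at $s=1$. Since the assumption $\alpha<\infty$ is equivalent to $f''(q)<\infty$, and since $f_{q}'(1-)=f'(q)=\beta$ and $f_{q}''(1-)=qf''(q)=b_{q}$, one has the local expansion
\begin{displaymath}
f_{q}(1-y)=1-\beta y+\tfrac{b_{q}}{2}y^{2}+o(y^{2}) \qquad \text{as } y\downarrow 0.
\end{displaymath}
Because the sub-critical process $\{Z_{q}(n)\}$ from Remark~\ref{ARem1} is a.s.\ extinct and its total progeny $V$ is therefore proper with $h(1)=1$, the quantity $y:=1-h(x)$ tends to $0$ as $z:=1-x\downarrow 0$, so this expansion may legitimately be substituted into the functional equation.

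Substituting $h(x)=1-y$ and $x=1-z$ into $h(x)=xf_{q}(h(x))$ and expanding yields, after cancellation of the constant terms,
\begin{displaymath}
(1-\beta)y+\tfrac{b_{q}}{2}y^{2}=z-z\beta y+o(y^{2})+o(zy).
\end{displaymath}
The first step is to read off the leading order: dividing by $z$ and letting $z\downarrow 0$ gives $y\sim z/(1-\beta)$, which is the first term of \eqref{AZ21} and also recovers $h'(1-)=\mathsf{E}V=1/(1-\beta)$.

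The second step is to iterate: plug the first-order approximation $y=z/(1-\beta)+o(z)$ back into the right-hand side of the displayed identity, so that the $z\beta y$ term contributes $\beta z^{2}/(1-\beta)$ and the $\tfrac{b_{q}}{2}y^{2}$ term contributes $b_{q}z^{2}/(2(1-\beta)^{2})$, all with relative error $o(z^{2})$. Solving for $y$ then gives
\begin{displaymath}
y=\frac{z}{1-\beta}-\frac{\beta\,z^{2}}{(1-\beta)^{2}}-\frac{b_{q}\,z^{2}}{2(1-\beta)^{3}}+o(z^{2}),
\end{displaymath}
which, on combining the two quadratic terms over the common denominator $2(1-\beta)^{3}$, is precisely \eqref{AZ21}.

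I do not expect a serious obstacle: the only mild subtlety is ensuring the error terms from the Taylor expansion of $f_{q}$ can be propagated through the bootstrap from $y=\mathcal{O}(z)$ to the $o(z^{2})$ remainder, which is immediate from the finiteness of $b_{q}$ guaranteed by $\alpha<\infty$. If desired, the relation $y=\mathcal{O}(z)$ can be established rigorously beforehand by monotonicity of $h$ and the inequality $1-h(x)\le (1-x)/(1-\beta f_{q}'(h(x)))$ obtained from the mean value theorem applied to \eqref{AZ11}, but even a direct bootstrap from the functional equation suffices.
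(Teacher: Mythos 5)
Your proposal is correct and follows essentially the same route as the paper: both rest on the fixed-point equation \eqref{AZ11} together with the second-order Taylor information $f_q'(1-)=\beta$, $f_q''(1-)=b_q$ supplied by $\alpha<\infty$, and both arrive at the same coefficients (the paper obtains them by implicitly differentiating \eqref{AZ11} to get $h'(1-)=1/(1-\beta)$ and $h''(1-)=\bigl(2\beta(1-\beta)+b_q\bigr)/(1-\beta)^3$ and then writing the Peano-form expansion of $h$). Your bootstrap variant has the minor merit of not presupposing a second-order Peano expansion of $h$ itself, but the substance of the argument is the same.
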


\begin{proof}
    We write the Peano's form Taylor expansion for $h(x)=\textsf{E}x^{V}$:
\begin{equation}      \label{AZ22}
    h(x)=1+h'(1-)(x-1)+\frac{h''(1-)}{2}(x-1)^{2}+o(x-1)^{2}  \qquad {as} \quad x\uparrow 1.
\end{equation}
    Formula \eqref{AZ11} and standard calculations produce that
\begin{displaymath}
    h'(1-)=\frac{1}{1-\beta} \qquad {and} \qquad h''(1-)=\frac{2\beta (1-\beta)+b_{q}}{(1-\beta)^{3}}\raise1.5pt\hbox{.}
\end{displaymath}
    Substituting these expressions in the expansion \eqref{AZ22}, entails \eqref{AZ21}.

    The lemma is proved.
\end{proof}

    Similar arguments can be used to verify the validity of the following lemma.

\begin{lemma}               \label{ALem2}
    Let $\beta<1$ and $\alpha <\infty$. Then
\begin{equation}        \label{AZ23}
    u(x)=\beta x\bigl[1-\gamma_{q}(1-x)\bigr]+ {\rho}(x),
\end{equation}
    where
\begin{displaymath}
    {\frac{{\rho}(x)}{(1-x)^2}}\to{const} \qquad {as} \quad x\uparrow 1.
\end{displaymath}
\end{lemma}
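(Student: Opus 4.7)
The plan is to mimic the proof of Lemma~\ref{ALem1}: perform a second-order Taylor expansion of $f_{q}'$ around the point $s=1$, substitute the already-established expansion of $h(x)$, multiply through by $x$, and verify that the coefficient of $(1-x)$ matches exactly the one produced by the target expression $\beta x\bigl[1-\gamma_{q}(1-x)\bigr]$, leaving a remainder of order $(1-x)^{2}$.

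First I would record the elementary identities $f_{q}'(s)=f'(qs)$ and $f_{q}''(s)=qf''(qs)$, so that $f_{q}'(1)=f'(q)=\beta$ and $f_{q}''(1)=qf''(q)=b_{q}$, together with the definition $\gamma_{q}=b_{q}/[\beta(1-\beta)]$. Then, writing $u(x)=x\,f_{q}'\bigl(h(x)\bigr)$ and Taylor expanding,
\begin{displaymath}
    f_{q}'\bigl(h(x)\bigr)=\beta+b_{q}\bigl(h(x)-1\bigr)+\tfrac{1}{2}f_{q}'''(1-)\bigl(h(x)-1\bigr)^{2}+o\bigl((h(x)-1)^{2}\bigr).
\end{displaymath}
Substituting the expansion from Lemma~\ref{ALem1}, namely $h(x)-1=-(1-x)/(1-\beta)+\mathcal{O}\bigl((1-x)^{2}\bigr)$, and using $\beta\gamma_{q}=b_{q}/(1-\beta)$, the first-order terms collapse to
\begin{displaymath}
    f_{q}'\bigl(h(x)\bigr)=\beta-\frac{b_{q}}{1-\beta}(1-x)+\mathcal{O}\bigl((1-x)^{2}\bigr).
\end{displaymath}
Multiplying by $x=1-(1-x)$ and regrouping, the linear term becomes $-\bigl[\beta+b_{q}/(1-\beta)\bigr](1-x)=-\beta(1+\gamma_{q})(1-x)$, which is precisely the linear coefficient in $\beta x\bigl[1-\gamma_{q}(1-x)\bigr]$. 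Defining $\rho(x)$ as the difference then yields $\rho(x)=C(1-x)^{2}+o\bigl((1-x)^{2}\bigr)$ with an explicit constant $C$ computable from $b_{q}$, $\beta$ and $f_{q}'''(1-)$.

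The main obstacle is the remainder analysis. In the super-critical regime $q<1$ the power series $f$ converges in a neighbourhood of $q$, so $f_{q}'''(1-)=q^{2}f'''(q)$ is automatically finite and the $\mathcal{O}\bigl((1-x)^{2}\bigr)$ error is legitimate. In the sub-critical regime $q=1$, however, $f_{q}=f$, and only $\alpha<\infty\Leftrightarrow f''(1-)<\infty$ is assumed; one must then argue that the second-order Peano remainder suffices without invoking $f_{q}'''(1-)$, by replacing the third-order term with a qualitative $o(1-x)$ factor inside $b_{q}(h(x)-1)$ and absorbing the resulting $o\bigl((1-x)^{2}\bigr)$ into $\rho(x)$. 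Once both cases are handled uniformly in this manner, the ratio $\rho(x)/(1-x)^{2}$ tends to the computable constant $C$ as $x\uparrow1$, completing the proof.
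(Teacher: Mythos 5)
Your proposal follows essentially the same route as the paper: a Taylor expansion of $f_{q}'$ about $1$, substitution of the expansion of $h(x)$ from Lemma~\ref{ALem1}, and multiplication by $x$ to match the linear coefficient $-\beta(1+\gamma_{q})(1-x)$. In fact you are somewhat more careful than the paper, which simply invokes a Lagrange-form remainder $r(y)\leq A\,(y-1)^{2}$ without commenting on the availability of $f_{q}'''$ near $1$ under the standing hypothesis $\alpha<\infty$.
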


\begin{proof}
    Write the Taylor expansion with Lagrange error bound for $f'_{q}(y)$:
\begin{displaymath}
    f'_{q}(y)=\beta+f''_{q}(1)(y-1)+r(y),
\end{displaymath}
    where $r(y)\leq{A}\cdot(y-1)^{2}$ as $y\uparrow{1}$ and $A=const$.
    Since $u(x)=xf_{q}'\bigl(h(x)\bigr)$, taking herein $y=h(x)$ and using \eqref{AZ21} leads to \eqref{AZ23}.

    The lemma is proved.
\end{proof}

    The following two results directly follow from Lemma~\ref{ALem1} and Lemma~\ref{ALem2} respectively.

\begin{lemma}               \label{ALem3}
    Let $\beta <1$, $\alpha <\infty$. Then
\begin{equation}        \label{AZ24}
    h\bigl(e^{\theta}\bigr)-1 \sim \frac{\theta}{1-\beta}+
    \frac{2+\beta\gamma_{q}}{2(1-\beta)^{2}}\theta^{2} \qquad {as} \quad \theta\to{0}.
\end{equation}
\end{lemma}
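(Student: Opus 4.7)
The plan is to reduce the statement to Lemma~\ref{ALem1} by composing the expansion of $1-h(x)$ near $x=1$ with the exponential substitution $x=e^{\theta}$, $\theta\to 0^{-}$. Since the expansion in \eqref{AZ21} is valid as $x\uparrow 1$, corresponding to $\theta\uparrow 0$, the idea is to plug in $1-x=-\theta-\theta^{2}/2+O(\theta^{3})$ and $(1-x)^{2}=\theta^{2}+O(\theta^{3})$, and then collect terms up to order $\theta^{2}$.

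More concretely, I would first rewrite \eqref{AZ21} in the form of a second-order Taylor expansion
\begin{displaymath}
    h(x)-1=h'(1-)(x-1)+\tfrac{1}{2}h''(1-)(x-1)^{2}+o\bigl((x-1)^{2}\bigr),
\end{displaymath}
using the values $h'(1-)=1/(1-\beta)$ and $h''(1-)=\bigl(2\beta(1-\beta)+b_{q}\bigr)/(1-\beta)^{3}$ established in the proof of Lemma~\ref{ALem1}. Then I would substitute $x-1=\theta+\theta^{2}/2+O(\theta^{3})$ and $(x-1)^{2}=\theta^{2}+O(\theta^{3})$, so that the linear term contributes $h'(1-)\theta$ and a secondary piece $\tfrac{1}{2}h'(1-)\theta^{2}$, while the quadratic term contributes $\tfrac{1}{2}h''(1-)\theta^{2}$. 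Grouping yields
\begin{displaymath}
    h(e^{\theta})-1=\frac{\theta}{1-\beta}+\frac{h'(1-)+h''(1-)}{2}\theta^{2}+o(\theta^{2}).
\end{displaymath}

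The only nontrivial step is the identification of the $\theta^{2}$-coefficient with $(2+\beta\gamma_{q})/\bigl(2(1-\beta)^{2}\bigr)$. For this I would use the identity $b_{q}=f_{q}''(1-)=qf''(q)$, which follows directly from $f_{q}(s)=f(qs)/q$, together with the defining relation $\gamma_{q}=qf''(q)/\bigl(\beta(1-\beta)\bigr)$ from the introduction. These give $b_{q}=\beta(1-\beta)\gamma_{q}$, which is the purely algebraic input needed to rewrite $h'(1-)+h''(1-)$ in the required form involving $\gamma_{q}$.

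I expect the algebraic step, rather than the analysis, to be the only place where care is required: one must be attentive to the sign of $1-e^{\theta}$ versus $e^{\theta}-1$ so as not to lose a term $\tfrac{1}{2}h'(1-)\theta^{2}$, which is precisely the contribution that converts the factor $(1-\beta)^{3}$ appearing in $h''(1-)$ into the factor $(1-\beta)^{2}$ appearing in the statement of the lemma. Everything else is routine Taylor bookkeeping under the standing assumptions $\beta<1$ and $\alpha<\infty$, the latter being exactly what guarantees $b_{q}<\infty$ so that the second-order expansion is available.
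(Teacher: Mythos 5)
Your reduction is exactly the route the paper intends (the paper offers no written proof of this lemma, merely asserting that it ``directly follows'' from Lemma~\ref{ALem1}), and the analytic part --- substituting $x=e^{\theta}$, using $e^{\theta}-1=\theta+\theta^{2}/2+\mathcal{O}(\theta^{3})$, and keeping the cross term $\tfrac12 h'(1-)\theta^{2}$ --- is correct and is indeed the step one must not skip. The problem is the one step you defer: the promised ``identification of the $\theta^{2}$-coefficient with $(2+\beta\gamma_{q})/(2(1-\beta)^{2})$'' does not go through. Carrying out the algebra you outline, with $h'(1-)=1/(1-\beta)$, $h''(1-)=\bigl(2\beta(1-\beta)+b_{q}\bigr)/(1-\beta)^{3}$ and $b_{q}=\beta(1-\beta)\gamma_{q}$, one gets $h''(1-)=\beta(2+\gamma_{q})/(1-\beta)^{2}$ and hence
\begin{displaymath}
\frac{h'(1-)+h''(1-)}{2}=\frac{(1-\beta)+\beta(2+\gamma_{q})}{2(1-\beta)^{2}}
=\frac{1+\beta+\beta\gamma_{q}}{2(1-\beta)^{2}}\,,
\end{displaymath}
which differs from the stated coefficient by $1/\bigl(2(1-\beta)\bigr)$. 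A concrete check confirms this: for $f(s)=\tfrac34+\tfrac14 s^{2}$ one has $q=1$, $\beta=\tfrac12$, $\gamma_{q}=2$, $h'(1-)=2$, $h''(1-)=8$, so the $\theta^{2}$-coefficient is $5=(1+\beta+\beta\gamma_{q})/\bigl(2(1-\beta)^{2}\bigr)$ rather than $6=(2+\beta\gamma_{q})/\bigl(2(1-\beta)^{2}\bigr)$; numerically $h(e^{-0.01})-1\approx-0.019518\approx 2\theta+5\theta^{2}$. So the constant in \eqref{AZ24} is a slip in the paper, and no rearrangement of $b_{q}=\beta(1-\beta)\gamma_{q}$ will produce it; as written, your proposal promises an identity that is false.

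Two remarks to put this in perspective. First, under the paper's own definition of $\sim$ (ratio tending to $1$), the $\theta^{2}$ term in \eqref{AZ24} is negligible against $\theta/(1-\beta)$, so the displayed relation is insensitive to the value of that coefficient; what is actually used downstream (Lemma~\ref{ALem5} and the theorems) is only $h(e^{\theta})-1=\theta/(1-\beta)+\mathcal{O}^{\ast}(\theta^{2})$, which your argument does establish. Second, your heuristic that the cross term $\tfrac12 h'(1-)\theta^{2}$ is ``precisely'' what converts the factor $(1-\beta)^{3}$ into $(1-\beta)^{2}$ is not accurate: that conversion already occurs through $b_{q}=\beta(1-\beta)\gamma_{q}$, which supplies a factor $(1-\beta)$ in the numerator of $h''(1-)$; the cross term merely adds $(1-\beta)/\bigl(2(1-\beta)^{2}\bigr)$ to the coefficient. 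If you restate the lemma with the corrected constant $(1+\beta+\beta\gamma_{q})/\bigl(2(1-\beta)^{2}\bigr)$, your proof is complete and correct.
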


\begin{lemma}               \label{ALem4}
    Let $\beta <1$, $\alpha <\infty$. Then
\begin{equation}        \label{AZ25}
    \frac{u\bigl(e^{\theta}\bigr)}{\beta}-1 = (1+\gamma_{q})\theta+{{\rho}(\theta)},
\end{equation}
    where ${{\rho}(\theta)}={\mathcal{O}^\ast}\left({\theta}^{2}\right)$ as $\theta\to{0}$.
\end{lemma}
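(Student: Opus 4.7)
The plan is to view Lemma~\ref{ALem4} as a direct Taylor-expansion consequence of Lemma~\ref{ALem2}: since Lemma~\ref{ALem2} already gives
\begin{displaymath}
u(x)=\beta x\bigl[1-\gamma_{q}(1-x)\bigr]+\rho(x),\qquad \rho(x)=\mathcal{O}^{\ast}\bigl((1-x)^{2}\bigr)\ \text{as }x\uparrow 1,
\end{displaymath}
all that remains is to substitute $x=e^{\theta}$ and expand in powers of $\theta$ around $\theta=0$. So I would treat the proof essentially as bookkeeping: reduce everything to the elementary expansion $e^{\theta}=1+\theta+\theta^{2}/2+o(\theta^{2})$, and verify that the coefficient of $\theta^{1}$ in $u(e^{\theta})/\beta-1$ comes out to $1+\gamma_{q}$, with the rest contributing an $\mathcal{O}^{\ast}(\theta^{2})$ remainder.

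First I would write $1-e^{\theta}=-\theta-\theta^{2}/2+o(\theta^{2})$ and consequently $(1-e^{\theta})^{2}=\theta^{2}+o(\theta^{2})$. Because $\rho(x)/(1-x)^{2}$ tends to a finite constant by Lemma~\ref{ALem2}, this immediately yields $\rho(e^{\theta})=\mathcal{O}^{\ast}(\theta^{2})$, absorbing the remainder term into the $\rho(\theta)$ of the statement. Next I would multiply out
\begin{displaymath}
\beta e^{\theta}\bigl[1-\gamma_{q}(1-e^{\theta})\bigr]=\beta e^{\theta}\bigl[1+\gamma_{q}(e^{\theta}-1)\bigr],
\end{displaymath}
and read off the constant, linear, and quadratic coefficients in $\theta$: the constant is $\beta$, the coefficient of $\theta$ is $\beta(1+\gamma_{q})$, and the coefficient of $\theta^{2}$ is an explicit constant (namely $\beta(1+3\gamma_{q})/2$). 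Subtracting $\beta$, dividing by $\beta$, and absorbing the quadratic term together with $\rho(e^{\theta})/\beta$ into the remainder $\rho(\theta)$ yields \eqref{AZ25}.

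There is essentially no hard step: the only place one must be attentive is in justifying the $\mathcal{O}^{\ast}$ claim rather than a mere $\mathcal{O}$. For this one needs that $\rho(\theta)/\theta^{2}$ has a definite (finite, and in fact positive) limit. This holds because the explicit $\theta^{2}$-coefficient $\beta(1+3\gamma_{q})/2$ from the polynomial part and the limiting constant from $\rho(e^{\theta})/\theta^{2}$ (which is inherited from $\lim_{x\uparrow 1}\rho(x)/(1-x)^{2}$ via $(1-e^{\theta})^{2}\sim\theta^{2}$) combine additively, and each is a well-defined constant under the hypotheses $\beta<1$ and $\alpha<\infty$ (which in particular guarantee $b_{q}=f_{q}''(1-)<\infty$, so that the constant in Lemma~\ref{ALem2} is genuinely finite). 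Hence the proof amounts to a short, careful substitution, and I expect it to occupy only a few lines.
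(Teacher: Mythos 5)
Your proposal is correct and matches the paper's intent exactly: the paper offers no separate proof of Lemma~\ref{ALem4}, stating only that it ``directly follows'' from Lemma~\ref{ALem2}, and your substitution $x=e^{\theta}$ with the elementary expansion $e^{\theta}=1+\theta+\theta^{2}/2+o(\theta^{2})$ is precisely the bookkeeping being left implicit. Your identification of the linear coefficient $1+\gamma_{q}$ and of the quadratic remainder (polynomial part plus $\rho(e^{\theta})/\beta$) is accurate, and you even flag the one point the paper glosses over, namely that the $\mathcal{O}^{\ast}$ claim requires the combined $\theta^{2}$-coefficient to be a definite nonzero constant.
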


    Next Lemma follows from combination of \eqref{AZ20}, \eqref{AZ24} and \eqref{AZ25}.
\begin{lemma}               \label{ALem5}
    Let $\beta <1$, $\alpha <\infty$. Then
\begin{equation}        \label{AZ26}
    \frac{\Delta_{n}\bigl(e^{\theta}\bigr)}{u^{n}(e^{\theta}\bigr)}
    = \frac{1}{1-\beta}\theta+{\mathcal{O}^\ast}\bigl({\theta^{2}}\bigr) \qquad {as} \quad \theta\to{0}
\end{equation}
    for any fixed $n\in \mathbb{N}$.
\end{lemma}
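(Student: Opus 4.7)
\smallskip

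The plan is to apply formula \eqref{AZ20} at the point $x=e^{\theta}$ and carry out a two-term expansion of every factor on the right-hand side, then invert the resulting reciprocal relation.

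First I would plug $x = e^{\theta}$ into \eqref{AZ20}, obtaining
\begin{displaymath}
    \frac{u^{n}(e^{\theta})}{\Delta_{n}(e^{\theta})}
    = \frac{1}{h(e^{\theta})-1} + \frac{\upsilon(e^{\theta})\bigl[1-u^{n}(e^{\theta})\bigr]}{1-u(e^{\theta})} + \sum_{k=1}^{n}\varepsilon_{k}(e^{\theta})u^{k}(e^{\theta}).
\end{displaymath}
For the dominant first term I invoke Lemma~\ref{ALem3}: since $h(e^{\theta})-1 = \frac{\theta}{1-\beta} + \frac{2+\beta\gamma_{q}}{2(1-\beta)^{2}}\theta^{2}+o(\theta^{2})$, a direct reciprocal expansion yields $\frac{1}{h(e^{\theta})-1} = \frac{1-\beta}{\theta} - \frac{2+\beta\gamma_{q}}{2} + o(1)$ as $\theta\to 0$.

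Next I observe that because $n$ is fixed, the middle term and the finite sum in \eqref{AZ20} are both bounded as $\theta\to 0$: indeed $u(e^{\theta})\to\beta<1$ by Lemma~\ref{ALem4}, so $1-u(e^{\theta})\to 1-\beta\neq 0$ and $1-u^{n}(e^{\theta})\to 1-\beta^{n}$, while $\upsilon(x)$ is continuous at $1$; likewise each $\varepsilon_{k}(e^{\theta})u^{k}(e^{\theta})$ has a finite limit at $\theta=0$ with $|\varepsilon_{k}|\le\varepsilon_{k}$, so the sum of the first $n$ of them is a bounded quantity $A_{n}+o(1)$. Collecting all constant contributions into a single real number $C_{n}$ gives the clean identity
\begin{displaymath}
    \frac{u^{n}(e^{\theta})}{\Delta_{n}(e^{\theta})} = \frac{1-\beta}{\theta} + C_{n} + o(1) \qquad \text{as } \theta\to 0.
\end{displaymath}

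Finally I invert this relation and Taylor-expand at $\theta=0$:
\begin{displaymath}
    \frac{\Delta_{n}(e^{\theta})}{u^{n}(e^{\theta})} = \frac{\theta}{(1-\beta)+C_{n}\theta+o(\theta)} = \frac{\theta}{1-\beta} - \frac{C_{n}}{(1-\beta)^{2}}\theta^{2} + o(\theta^{2}),
\end{displaymath}
which is exactly \eqref{AZ26} with the second-order term of order $\theta^{2}$ and explicit leading coefficient.

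The work is almost entirely bookkeeping; the only subtle point is that each ingredient (Lemmas~\ref{ALem3}--\ref{ALem4}, the boundedness of the $\varepsilon_{k}$-sum, and continuity of $\upsilon$ at $1$) must be pushed to a genuine second-order expansion before one inverts, since a naive use of the leading-order equivalence $h(e^{\theta})-1\sim\theta/(1-\beta)$ alone would only deliver $\Delta_{n}/u^{n}=\theta/(1-\beta)+o(\theta)$ and miss the $\mathcal{O}^{\ast}(\theta^{2})$ refinement claimed in \eqref{AZ26}. Once the constant $C_{n}$ is tracked, the $\mathcal{O}^{\ast}(\theta^{2})$ conclusion follows from the explicit value of its coefficient $-C_{n}/(1-\beta)^{2}$.
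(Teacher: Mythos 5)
Your proposal is correct and follows exactly the route the paper intends: the paper's entire ``proof'' of Lemma~\ref{ALem5} is the one-line remark that it follows by combining \eqref{AZ20}, \eqref{AZ24} and \eqref{AZ25}, and your writeup is precisely that combination carried out (substitute $x=e^{\theta}$ into \eqref{AZ20}, expand $1/(h(e^{\theta})-1)$ to two terms via Lemma~\ref{ALem3}, bound the remaining terms using $u(e^{\theta})\to\beta<1$, and invert). The only detail you assume beyond what the paper records is that each $\varepsilon_{k}(e^{\theta})$ has a limit as $\theta\to 0$ (the paper only gives the uniform bound $|\varepsilon_{k}|\le\varepsilon_{k}$), which is needed to turn the $\mathcal{O}(1)$ constant into a genuine $C_{n}+o(1)$ and hence the claimed $\mathcal{O}^{\ast}(\theta^{2})$ rather than $\mathcal{O}(\theta^{2})$ --- a gap the paper itself glosses over.
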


    Now we prove the following lemma.

\begin{lemma}               \label{ALem6}
    Let $\beta <1$, $\alpha <\infty$. Then
\begin{equation}        \label{AZ27}
    \ln\prod_{k=0}^{n-1}u_{k}\bigl(e^{\theta}\bigr) \sim -\left(1-\frac{u\bigl(e^{\theta}\bigr)}{\beta}\right)n-
    \gamma_{q}\theta \cdot \sum_{k=0}^{n-1}u^{k}\bigl(e^{\theta}\bigr) \qquad {as} \quad \theta\to{0}
\end{equation}
    for any fixed $n\in \mathbb{N}$.
\end{lemma}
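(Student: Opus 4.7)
The plan is to expand each factor $u_k(e^\theta)$ to first order in $\theta$ with $n$ fixed, sum the logarithms $\ln u_k(e^\theta)$, and match the result to the right-hand side of \eqref{AZ27}. The key observation is that $u_k(x)$ differs from $u(x)/\beta$ only through the argument shift $h_k(x) = h(x) - \Delta_k(x)$ inside $f_q'$, so Lemma~\ref{ALem5}, which controls $\Delta_k$ in terms of $u^k$, provides exactly the quantitative correction needed.

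First I will Taylor expand $f_q'$ around $h(x)$ to obtain
\begin{equation*}
u_k(x) \;=\; \frac{u(x)}{\beta} \;-\; \frac{x\,f_q''(h(x))}{\beta}\,\Delta_k(x) \;+\; O\bigl(\Delta_k^2(x)\bigr).
\end{equation*}
Since $h_k(1)=1$, one has $u_k(1) = f_q'(1)/\beta = 1$, so $\ln u_k(e^\theta) \sim u_k(e^\theta) - 1$ as $\theta \to 0$. Using $f_q(s) = f(qs)/q$ I then compute $f_q''(1)/\beta = qf''(q)/\beta = (1-\beta)\gamma_q$, and Lemma~\ref{ALem5} supplies $\Delta_k(e^\theta) \sim u^k(e^\theta)\,\theta/(1-\beta)$. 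Substituting both into the Taylor expansion yields
\begin{equation*}
\ln u_k(e^\theta) \;\sim\; \left(\frac{u(e^\theta)}{\beta} - 1\right) \;-\; \gamma_q\,\theta\,u^k(e^\theta),
\end{equation*}
and summing over $k = 0, \ldots, n-1$ gives precisely \eqref{AZ27}. As a consistency check, Lemma~\ref{ALem4} shows that the common leading-order behavior of both sides is $\textsf{E}S_n\cdot\theta$, in agreement with \eqref{AZ10} and with the identity $T_n(e^\theta) = 1 + \textsf{E}S_n\,\theta + O(\theta^2)$.

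The main obstacle is error bookkeeping. Three error sources must each be shown to contribute only $O(\theta^2)$ to the individual summand $\ln u_k(e^\theta)$: the Lagrange remainder $O(\Delta_k^2) = O(\theta^2)$ in the Taylor expansion of $f_q'$, the $O(\theta^2)$ remainder hidden in the Lemma~\ref{ALem5} asymptotic, and the $O\bigl((u_k(e^\theta)-1)^2\bigr) = O(\theta^2)$ error from $\ln(1+y) = y + O(y^2)$. Because $n$ is fixed and the outer sum has only $n$ terms, the accumulated error remains $O(\theta^2)$, which is negligible compared to the leading $O(\theta)$ quantity $\textsf{E}S_n\cdot\theta \neq 0$, so the ratio of the two sides in \eqref{AZ27} tends to $1$ and the asymptotic equivalence follows.
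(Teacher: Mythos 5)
Your proposal is correct and follows essentially the same route as the paper: Taylor expansion of $f_q'$ around $h(e^\theta)$ to write $u_k(e^\theta)-u(e^\theta)/\beta$ in terms of $\Delta_k(e^\theta)$, substitution of Lemma~\ref{ALem5} for $\Delta_k$, the identity $f_q''(1)/\beta=(1-\beta)\gamma_q$, and summation over the fixed range $k=0,\dots,n-1$. The only cosmetic difference is that you linearize each $\ln u_k$ term-by-term via $\ln(1+y)=y+\mathcal{O}(y^2)$, while the paper controls the same error globally with the inequality $\ln(1-y)\geq -y-y^2/(1-y)$; both are adequate since $n$ is fixed.
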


\begin{proof}
    Using the inequality $\ln(1-y)\geq -y-y^{2}\bigl/(1-y)$, which is valid for $0\leq y<1$, we have
\begin{eqnarray}       \label{AZ28}
    \ln\prod_{k=0}^{n-1}u_{k}\bigl(e^{\theta}\bigr)
    &=& \sum_{k=0}^{n-1}\ln\left\{1-\left[1-u_{k}\bigl(e^{\theta}\bigr)\right]\right\}  \nonumber \\
    \nonumber \\
    &=& \sum_{k=0}^{n-1}\left[u_{k}\bigl(e^{\theta}\bigr)-1\right]
    +\rho_{n}^{(1)}(\theta) =: I_{n}(\theta)+\rho_{n}^{(1)}(\theta),
\end{eqnarray}
    where
\begin{equation}        \label{AZ29}
    I_{n}(\theta) = -\sum_{k=0}^{n-1}\left[1-u_{k}\bigl(e^{\theta}\bigr)\right],
\end{equation}
    and
\begin{displaymath}
    -\sum\limits_{k=0}^{n-1}\frac{\left[1-u_{k}\bigl(e^{\theta}\bigr)\right]^{2}}{u_{k}\bigl(e^{\theta}\bigr)}\leq\rho_{n}^{(1)}(\theta)\leq 0.
\end{displaymath}

    It is easy to see that the sequence of functions $\left\{h_{k}(x)\right\}$ does not decrease in $k\in \mathbb{N}$.
    Then, by the property of the GF, and the function $u_{k}\bigl(e^{\theta}\bigr)$ is non-decreasing in $k$, for any
    fixed $n\in \mathbb{N}$ and $\theta\in \mathbb{R}$. Therefore,
\begin{equation}        \label{AZ30}
    \frac{1-u_{0}\bigl(e^{\theta}\bigr)}{u_{0}\bigl(e^{\theta}\bigr)}I_{n}(\theta)\leq\rho_{n}^{(1)}(\theta)\leq 0.
\end{equation}
    According to the GF property, we will also verify that under our conditions 
    $1-u_{0}\bigl(e^{\theta}\bigr)\to 0$ as $\theta\to 0$. Then, according to \eqref{AZ30},
    $\rho_{n}^{(1)}(\theta)\to 0$ if only  $I_{n}(\theta)$ has a finite limit as $\theta\to 0$.

    Using the Taylor formula, we write
\begin{displaymath}
    f_{q}'(t)=f_{q}'(t_{0})-f_{q}''(t_{0})(t_{0}-t)+(t_{0}-t)g(t_{0};t),
\end{displaymath}
    where $g(t_{0};t)=(t_{0}-t)f_{q}'''(\tau)\bigl/2$ and $t_{0}<\tau<t$.
    Hence, at $t_{0}=h(x)$ and $t=h_{k}(x)$ we have the following relation:
\begin{displaymath}
    u_{k}(x)=\frac{u(x)}{\beta}-\frac{xf_{q}''\left(h(x)\right)}{\beta}\Delta_{k}(x)+\Delta_{k}(x)g_{k}(x),
\end{displaymath}
    where $g_{k}(x)=x\Delta_{k}(x)f_{q}'''(\tau)\bigl/2\beta$ and $h_{k}(x)<\tau<h(x)$. Therefore,
\begin{displaymath}
    u_{k}\bigl(e^{\theta}\bigr)=\frac{u\bigl(e^{\theta}\bigr)}{\beta}-
    \frac{e^{\theta}f_{q}''\left(h\bigl(e^{\theta}\bigr)\right)}{\beta}\Delta_{k}\bigl(e^{\theta}\bigr)
    +\Delta_{k}\bigl(e^{\theta}\bigr)g_{k}\bigl(e^{\theta}\bigr).
\end{displaymath}
    Then \eqref{AZ29} becomes
\begin{equation}       \label{AZ31}
    I_{n}(\theta)=-\left[1-\frac{u\bigl(e^{\theta}\bigr)}{\beta}\right]n-
    \frac{e^{\theta}f_{q}''\left(h\bigl(e^{\theta}\bigr)\right)}{\beta}
    \sum_{k=0}^{n-1}\Delta_{k}\bigl(e^{\theta}\bigr)+\rho_{n}^{(2)}(\theta),
\end{equation}
    where
\begin{displaymath}
    0 \leq \rho_{n}^{(2)}(\theta) \leq \Delta_{0}\bigl(e^{\theta}\bigr)\sum_{k=0}^{n-1}g_{k}\bigl(e^{\theta}\bigr).
\end{displaymath}
    In the last step we used the fact that $\bigl|\Delta_{n}(x)\bigr| \leq \beta^{n} \bigl|\Delta_{0}(x)\bigr|$ which follows from 
    inequality \eqref{AZ18}. It follows from \eqref{AZ24} that $\Delta_{0}\bigl(e^{\theta}\bigr)=\mathcal{O}(\theta)$ as $\theta\to{0}$. 
    And also the asymptotic estimation \eqref{AZ19} implies that $g_{k}\bigl(e^{\theta}\bigr)=\mathcal{O}\bigl(\beta^{k}\bigr)$
    as $k\to\infty$ and hence the functional series $\sum_{k=0}^{\infty}g_{k}\bigl(e^{\theta}\bigr)$ converges
    for all $\theta\in \mathbb{R}$. Therefore,
\begin{displaymath}
    \Delta_{0}\bigl(e^{\theta}\bigr)\sum_{k=0}^{n-1}g_{k}\bigl(e^{\theta}\bigr)
    =\mathcal{O}(\theta)\to{0} \qquad {as} \quad \theta\to{0}.
\end{displaymath}
    Then the remainder term in \eqref{AZ31}
\begin{equation}        \label{AZ32}
    \rho_{n}^{(2)}(\theta)\to{0} \qquad {as} \quad \theta\to{0}.
\end{equation}
    Assertion \eqref{AZ26} implies that
\begin{equation}       \label{AZ33}
    \sum_{k=0}^{n-1}\Delta_{k}\bigl(e^{\theta}\bigr)= \frac{\theta}{1-\beta}
    \sum_{k=0}^{n-1}u^{k}\bigl(e^{\theta}\bigr) \left(1+\mathcal{O}^{\ast}\bigl({\theta}\bigr)\right)
    \qquad {as} \quad \theta\to{0}.
\end{equation}
    Since ${e^{\theta}f''_{q}\left(h\bigl(e^{\theta}\bigr)\right)}\to{f''_{q}(1)}$ as $\theta\to{0}$,
    combining relations \eqref{AZ28}, \eqref{AZ31}--\eqref{AZ33} and, after some calculations,
    we will come to \eqref{AZ27}.

    The lemma is proved.
\end{proof}

\section{Proof of Theorems}             \label{ASec3}

\begin{proof}[\textbf{Proof of Theorem~\ref{ATh1}}]
    Define a sequence of variables
\begin{displaymath}
    {\zeta_{n}}:=\frac{S_{n}-\textsf{E}S_{n}}{\mathcal{K}_{n}}
\end{displaymath}
    for some positive real-valued sequence $\mathcal{K}_{n}$ such that $\mathcal{K}_{n}\to\infty$
    as $n\to\infty$ and then an appropriate characteristic function
\begin{displaymath}
    {\varphi_{\zeta_{n}}(\theta)}:=\textsf{E}\left[\exp\bigl\{i\theta\zeta_{n}\bigr\}\right]
    = \textsf{E}\left[\theta_{n}^{S_{n}} \cdot
    \exp\left\{\frac{-i\theta\textsf{E}S_{n}}{\mathcal{K}_{n}}\right\}\right],
\end{displaymath}
    where $\theta_{n}:=\exp\left\{i\theta \big/ \mathcal{K}_{n}\right\}$ and $\theta\in\mathbb{R}$.
    Using \eqref{AZ10} we write
\begin{equation}     \label{AZ34}
    \ln{\varphi_{\zeta_{n}}(\theta)} \sim -\left(1+\gamma_{q}\right)\frac{i\theta}{\mathcal{K}_{n}}n+
    \ln{T_{n}\left(\theta_{n}\right)} \qquad {as} \quad n\to\infty,
\end{equation}
    where $T_{n}(x)=\textsf{E}x^{S_{n}}$. Simultaneously according to \eqref{AZ17} and Lemma~\ref{ALem6},
\begin{equation}     \label{AZ35}
    \ln{T_{n}\left(\theta_{n}\right)} \sim -\left(1-\frac{u\left(\theta_{n}\right)}{\beta}\right)n-
    \frac{i\theta\gamma_{q}}{\mathcal{K}_{n}} \cdot\sum_{k=0}^{n-1}u^{k}\left(\theta_{n}\right)
\end{equation}
    as $n\to\infty$. In turn, \eqref{AZ25} implies
\begin{equation}     \label{AZ36}
    -\left(1-\frac{u\left(\theta_{n}\right)}{\beta}\right)n =
    \left(1+\gamma_{q}\right){\frac{i\theta}{\mathcal{K}_{n}}}n
    +n\rho\left({\frac{i\theta}{\mathcal{K}_{n}}}\right),
\end{equation}
    where $0<\lim_{\theta\to{0}}{\rho(\theta)\big/{\theta}^{2}}=:{C_\rho}<\infty$. Now we readily choose
\begin{equation}     \label{AZ37}
    \mathcal{K}_{n}=\mathcal{O}^{\ast}\left(\sqrt{n}\right) \qquad {as} \quad n\to\infty
\end{equation}
    which is equivalent to $\mathcal{K}_{n}\big/\sqrt{n}\to{C_{\mathcal{K}}}>0$. Hence we see that
\begin{equation}     \label{AZ38}
    n\rho\left({\frac{i\theta}{\mathcal{K}_{n}}}\right)\to -{K}{\theta^{2}} \qquad {as} \quad n\to\infty,
\end{equation}
    where $K:={C_\rho}\big/{C^{2}_{\mathcal{K}}}>0$. At the same time, since $u(x)=xf_{q}'\bigl(h(x)\bigr)$, in our assumptions
    we observe that $u(x)\leq\beta$ uniformly in $x\in[0,1]$. Therefore, one can choose $\varepsilon>0$ so desirably small that
\begin{displaymath}
\left|u^{k}\left(\theta_{n}\right)-\beta^{k}\right|\leq\varepsilon
\end{displaymath}
    for large enough $n$. This entails that ${\lim_{n\to\infty}}\sum_{k=0}^{n-1}u^{k}\left(\theta_{n}\right)$ converges
    uniformly in $\theta\in\mathbb{R}$. Eventually, after combination of asymptotic estimations \eqref{AZ35}--\eqref{AZ38},
    and denoting $\sigma^2:=2C_\rho$, the relation \eqref{AZ34} becomes
\begin{equation}     \label{AZ39}
    \ln{\varphi_{\zeta_{n}}(\theta)}=-{\frac{\sigma^{2}\theta^{2}}{2}}+ {\mathcal{K}_{n}(\theta)},
\end{equation}
    where ${\mathcal{K}_{n}(\theta)}=\mathcal{O}^{\ast}\left({i\theta}\big/{\mathcal{K}_{n}}\right)$
    as $n\to\infty$. Finally, we conclude that
\begin{displaymath}
    {\varphi_{\zeta_{n}}(\theta)}\longrightarrow\exp\left\{-\frac{\sigma^{2}\theta^{2}}{2}\right\}
    \qquad {as} \quad n\to\infty
\end{displaymath}
    for any fixed $\theta\in\mathbb{R}$. The assertion follows now from the continuity theorem for characteristic functions.

    Theorem~\ref{ATh1} is proved.
\end{proof}

\begin{proof}[\textbf{Proof of Theorem~\ref{ATh2}}]
    The relation \eqref{AZ39} and formal use of inequalities
\begin{displaymath}
    \left|e^{iy}\right|\leq{1} \qquad {and} \qquad  \bigl|e^{iy}-1-y\bigr|\leq{\frac{|y|^2}{2}}
\end{displaymath}
    imply
\begin{eqnarray}       \label{AZ40}
    \left|{\varphi_{\zeta_{n}}(\theta)}-{e^{-{\sigma^{2}\theta^{2}}/{2}}}\right|
    &\leq& \left|{e^{-{\sigma^{2}\theta^{2}}/{2}}}\right|
    \left|e^{\mathcal{K}_{n}(\theta)}-1\right|   \nonumber\\
    \nonumber\\
    &\leq& \left|e^{\mathcal{K}_{n}(\theta)}-1-{\mathcal{K}_{n}(\theta)}\right|+
    \bigl|{\mathcal{K}_{n}(\theta)}\bigr|  \nonumber\\
    \nonumber\\
    &\leq& \frac{\bigl[{\mathcal{K}_{n}(\theta)}\bigr]^{2}}{2}+\bigl|{\mathcal{K}_{n}(\theta)}\bigr|
\end{eqnarray}
    for all $n$. By definition we write
\begin{displaymath}
    {\mathcal{K}_{n}(\theta)}={C(n)}\frac{i\theta}{\mathcal{K}_{n}}  \raise 1.5pt\hbox{,}
\end{displaymath}
    where $\lim_{n\to{\infty}}{C(n)}=C<\infty$. Then, denoting
\begin{displaymath}
    F_n(x):=\textsf{P}\bigl\{{\zeta_{n}}<x\bigr\},
\end{displaymath}
    and using the estimation \eqref{AZ40}, we obtain the Berry-Esseen approximation bound {\cite[p.~538]{Feller68}} as follows:
\begin{eqnarray}       \label{AZ41}
    \Bigl|F_n(x)-{\Phi}_{0,\sigma^{2}}(x)\Bigr|
    &\leq& \frac{1}{\pi} \int\limits_{-T}^{T}\left|\frac{{\varphi_{\zeta_{n}}(\theta)}
    -{e^{-{\sigma^{2}\theta^{2}}/{2}}}}{\theta}\right|d\theta +\frac{24{M}}{\pi{T}} \nonumber \\
    \nonumber \\
    &\leq& \frac{2}{\pi} {\frac{C(n)}{\mathcal{K}_{n}}}T +\frac{24{M}}{\pi{T}}
\end{eqnarray}
    for all $x$ and $T>0$, where ${M}$ is such that $\Phi'_{0,\sigma^{2}}(x)\leq{M}$.
    It can be decidedly taken that ${M}={1}\bigl/{\sigma\sqrt{2\pi}}$.

    We let $T\to\infty$ and in the same time it is necessary to be $T=o\left(\sqrt{n}\right)$ since
    $\mathcal{K}_{n}=\mathcal{O}^{\ast}\left(\sqrt{n}\right)$. We can choose $T$ in general, in the
    form of $T=n^{\delta}{\mathcal{L}_{T}(n)}$, where $0<\delta<1/2$ and $\mathcal{L}_{T}(n)$
    slowly varies at infinity in the sense of Karamata. Then we reform \eqref{AZ41} as follows:
\begin{equation}       \label{AZ42}
    \Bigl|F_n(x)-{\Phi}_{0,\sigma^{2}}(x)\Bigr| \leq
    \frac{\mathcal{L}_{C}(n)}{n^{1/2-\delta}}+\frac{\mathcal{L}_{M}(n)}{n^{\delta}} \raise 1.5pt\hbox{,}
\end{equation}
    where
\begin{displaymath}
    \mathcal{L}_{C}(n):=\frac{2C(n)}{\pi} {\mathcal{L}_{T}(n)} \qquad {and} \qquad
    \mathcal{L}_{M}(n):=\frac{24M}{\pi} \frac{1}{\mathcal{L}_{T}(n)} \raise 1.5pt\hbox{.}
\end{displaymath}
    To come up to optimum degree of an estimation of approximation in \eqref{AZ42}, we would choose
    value of $\delta$ such that $(1/2-\delta)\delta$ has reached the maximum value for $\delta\in(0,1/2)$.
    It happens only in a unique case when $\delta=1/2-\delta$ or $\delta=1/4$. Thus \eqref{AZ42} becomes
\begin{displaymath}
    \Bigl|F_n(x)-{\Phi}_{0,\sigma^{2}}(x)\Bigr| \leq
    \frac{\mathcal{L}(n)}{n^{1/4}} \raise 1.5pt\hbox{,}
\end{displaymath}
    where ${\mathcal{L}(n)}={\mathcal{L}_{C}(n)}+{\mathcal{L}_{M}(n)}$ slowly varies at infinity.

    The theorem proof is completed.
\end{proof}

\begin{proof}[\textbf{Proof of Theorem~\ref{ATh3}}]
    First we will show that
\begin{equation}          \label{AZ43}
    \frac{S_{n}}{n} \buildrel{\textsf{P}}\over\longrightarrow {1+\gamma_{q}} \qquad {as} \quad n\to\infty.
\end{equation}
    Writing
\begin{displaymath}
    \eta_n:=\frac{S_{n}}{n}=\frac{\textsf{E}S_{n}}{n}+\frac{\mathcal{K}_{n}}{n}{\zeta_{n}},
\end{displaymath}
    and considering \eqref{AZ10}, we have
\begin{eqnarray}          \label{AZ44}
    {\varphi_{\eta_{n}}(\theta)}
    &:=& \textsf{E}\left[\exp\bigl\{i\theta\eta_{n}\bigr\}\right]  \nonumber \\
    \nonumber \\
    &=& e^{i\theta\left(1+\gamma_{q}\right)}\bigl[{\varphi_{\zeta_{n}}(\theta)}\bigr]^{{\mathcal{K}_{n}}/n}
    \left(1-{\frac{i\theta\gamma_{q}}{1-\beta}}\frac{1}{n}{\bigl(1-\beta^{n}\bigr)}\right),
\end{eqnarray}
    where ${\varphi_{\zeta_{n}}(\theta)}=\textsf{E}\left[\exp\bigl\{i\theta\zeta_{n}\bigr\}\right]$.
    Relation \eqref{AZ39} implies
\begin{displaymath}
    {\varphi_{\zeta_{n}}(\theta)}={e^{-{\sigma^{2}\theta^{2}}/{2}}}
    \left(1+\mathcal{O}^{\ast}\left({i\theta}\big/{\mathcal{K}_{n}}\right)\right)
    \qquad {as} \quad n\to\infty
\end{displaymath}
    and hence $\bigl[{\varphi_{\zeta_{n}}(\theta)}\bigr]^{{\mathcal{K}_{n}}/n}\to{0}$ as $n\to\infty$.
    Thus \eqref{AZ44} entails
\begin{displaymath}
    {\varphi_{\eta_{n}}(\theta)} \to {e^{i\theta\left(1+\gamma_{q}\right)}} \qquad {as} \quad n\to\infty.
\end{displaymath}
    According to the continuity theorem, this is sufficient for being of  \eqref{AZ43}.

    From \eqref{AZ44} we obtain
\begin{eqnarray*}
    \left|{\varphi_{\eta_{n}}(\theta)}-{e^{i\theta\left(1+\gamma_{q}\right)}}\right|
    &\leq& \left|\bigl[{\varphi_{\zeta_{n}}(\theta)}\bigr]^{{\mathcal{K}_{n}}/n}
    \left(1-{\frac{i\theta\gamma_{q}}{1-\beta}}\frac{1}{n}{\bigl(1-\beta^{n}\bigr)}\right)-1\right|   \nonumber\\
    \nonumber\\
    &\leq& \left|{\frac{i\theta\gamma_{q}}{1-\beta}}\frac{1}{n}{\bigl(1-\beta^{n}\bigr)}\right|.
\end{eqnarray*}
    We accounted in the last step that $|{\varphi_{\ast}(\theta)}|\leq1$ for any characteristic function.
    Now we can write the Berry--Esseen bound as follows:
\begin{eqnarray*}
    \Bigl|\textsf{P}\bigl\{\eta_{n}<x\bigr\}-I_{1+\gamma_{q}}(x)\Bigr|
    &\leq& \frac{1}{\pi} \int\limits_{-T}^{T}\left|\frac{{\varphi_{\eta_{n}}(\theta)}
    -{e^{i\theta\left(1+\gamma_{q}\right)}}}{\theta}\right|d\theta
    +\frac{24{M}_{\eta}}{\pi{T}} \nonumber \\
    \nonumber \\
    &\leq& \frac{\gamma_{q}}{\pi} {\frac{\bigl(1-\beta^{n}\bigr)}{1-\beta}}\frac{2T}{n}
    +\frac{24}{\pi{T}} \leq \frac{1}{\pi} {\frac{2\gamma_{q}}{1-\beta}}\frac{T}{n}
    +\frac{24}{\pi{T}} \raise 1.5pt\hbox{,}
\end{eqnarray*}
    where we put ${M}_{\eta}=1$ which is suitable for the degenerate distribution function.

    In this case we choose $T=n^{\delta}{\mathcal{L}_{T}(n)}$,
    where $0<\delta<1$ and $\mathcal{L}_{T}(n)$ slowly varies
    at infinity. Therefore
\begin{equation}       \label{AZ45}
    \Bigl|\textsf{P}\bigl\{\eta_{n}<x\bigr\}-I_{1+\gamma_{q}}(x)\Bigr| \leq
    \frac{\mathcal{L}_{\beta}(n)}{n^{1-\delta}}+\frac{\mathcal{L}_{1}(n)}{n^{\delta}} \raise 1.5pt\hbox{,}
\end{equation}
    where
\begin{displaymath}
    \mathcal{L}_{\beta}(n):=\frac{1}{\pi} {\frac{2\gamma_{q}}{1-\beta}}{\mathcal{L}_{T}(n)} \qquad {and} \qquad
    \mathcal{L}_{1}(n):=\frac{24}{\pi} \frac{1}{\mathcal{L}_{T}(n)} \raise 1.5pt\hbox{.}
\end{displaymath}
    We find $\delta=1/2$ and \eqref{AZ45} becomes
\begin{displaymath}
    \Bigl|\textsf{P}\bigl\{\eta_{n}<x\bigr\}-I_{1+\gamma_{q}}(x)\Bigr|  \leq
    \frac{\mathcal{L}_{\gamma}(n)}{n^{1/2}} \raise 1.5pt\hbox{,}
\end{displaymath}
    where ${\mathcal{L}_{\gamma}(n)}={\mathcal{L}_{\beta}(n)}+{\mathcal{L}_{1}(n)}$ slowly varies at infinity.

    The proof is completed.
\end{proof}


\begin{thebibliography}{99}
\bibitem{AsHer}
    {Asmussen~S. and Hering~H.}
    {Branching processes}. Birkh\"{a}user, Boston, 1983.

\bibitem{ANey}
    {Athreya~K.~B. and Ney~P.~E.}
    {Branching processes}. Springer, New York, 1972.

\bibitem{Feller68}
    {Feller~W.}
    {An Introduction to Probability Theory and its Applications}. vol.1. John Wiley \& Sons, 1968.

\bibitem{Harris63}
    {Harris~T.~E.}
    {The theory of branching processes}. Springer-Verlag, Berlin, 1963.

\bibitem{Imomov2012}
    {Imomov~A.~A.}
    {On Markov continuous time analogue of Q-processes.} Theory Prob. and Math. Stat., 2012, 84, 57--64.

\bibitem{Imomov2014}
    {Imomov~A.~A.}
    {Limit Theorem for the Joint Distribution in the Q-processes.}
    {Journal of Siberian Federal University. Mathematics and Physics}, 2014. v.~7(3), 289--296.

\bibitem{Jagers}
    {Jagers~P.}
    {Branching Progresses with Biological applications}.
    JW~\&~Sons, Pitman Press, GB, 1975.

\bibitem{Karp94}
    {Karpenko~A.~V. and Nagaev~S.~V.}
    {Limit theorems for the total number of descendents for the Galton-Watson branching process}.
    Theory Probab. Appl., 1994, 38, 433--455.

\bibitem{Ken75}
    {Kennedy~D.~P.}
    {The Galton-Watson process conditioned on the total progeny}.
    Jour. Appl. Prob., 1975, {\bf{12}}, 800--806.

\bibitem{Kol84}
    {Kolchin~V.~F.}
    {Random mappings}. Nauka, Moscow, 1984. (Russian)

\bibitem{Pakes71}
    {Pakes~A.~G.}
    {Some limit theorems for the total progeny of a branching process}.
    {Adv. App. Prob.}, 1971, {\bf3(1)}, 176--192.

\bibitem{Sev71}
    {Sevastyanov~B.~A.}
    {Branching processes}. Nauka, Moscow, 1971. (Russian)
\end{thebibliography}
\end{document}